\let\oldaddcontentsline\addcontentsline
\newcommand{\stoptocentries}{\renewcommand{\addcontentsline}[3]{}}
\newcommand{\starttocentries}{\let\addcontentsline\oldaddcontentsline}
\newtheorem{theorem}{Theorem}[section]
\newtheorem{lemma}[theorem]{Lemma}
\newtheorem{prop}[theorem]{Proposition}
\newtheorem*{cor*}{Corollary}
\newtheorem*{conjecture*}{Conjecture}
\newtheorem*{thm*}{Theorem}
\newtheorem*{lem*}{Lemma}
\newtheorem*{prop*}{Proposition}
\theoremstyle{definition}
\newtheorem{definition}[theorem]{Definition}
\newtheorem{example}[theorem]{Example}
\newtheorem*{defn*}{Definition}
\theoremstyle{remark}
\newtheorem{remark}[theorem]{Remark}
\newcommand{\M}{\mathcal{M}}
\newcommand{\cH}{\mathcal{H}}
\newcommand{\G}{\Gamma}
\title{Subalgebras, subgroups and singularity}
\date{\today}
\begin{document}
\author{Tattwamasi Amrutam}
\address{T. Amrutam, Department of mathematics, Ben Gurion University of the Negev,
P.O.B. 653, Be'er Sheva, 8410501, Israel}
\email{tattwama@post.bgu.ac.il}
\author{Yair Hartman}
\address{Y. Hartman, Department of mathematics, Ben Gurion University of the Negev,
P.O.B. 653, Be'er Sheva, 8410501, Israel}
\email{hartmany@bgu.ac.il}
\thanks{This research was supported by ISRAEL SCIENCE FOUNDATION grant 1175/18. }
\maketitle

\begin{abstract}
This paper is concerned with the non commutative analog of the Normal Subgroup Theorem for certain groups. Inspired by~\cite{KalPan}, we show that all $\Gamma$-invariant subalgebras of $L\Gamma$ and $C^*_r(\Gamma)$ are ($\G$-)co-amenable. 
The groups we work with satisfy a singularity phenomenon described in \cite{BBHP}. 
The setup of singularity allows us to obtain a description of $\Gamma$-invariant intermediate von Neumann subalgebras $L^{\infty}(X,\xi)\subset\mathcal{M}\subset L^{\infty}(X,\xi)\rtimes\Gamma$ in terms of the normal subgroups of $\Gamma$.
\end{abstract}
\tableofcontents

\pagebreak
\section{Introduction}

The notion of \say{singularity} has been used to prove rigidity results for $\Gamma$-operator algebras in various settings, where $\Gamma$ is a discrete countable group. It appears in the works of \cite{kalantar_kennedy_boundaries, Haagerup, HartKal,BC14} etc., where the authors put singular states into use. More recently, \cite{BBHP,bader2021charmenability} used singularity in the context of $\Gamma$-equivariant ucp maps $\Phi:\mathcal{M}\to L^{\infty}(B,\nu)$, where $\mathcal{M}$ is a $\Gamma$-von Neumann algebra and $(B,\nu)$ is a non-singular probability $\Gamma$-space. Such $\Phi$ is called singular if the states on $\mathcal{M}$ given by the dual map for almost every $b\in B$ are singular with respect to their $\Gamma$-translations.

In this paper, we further highlight the role of the singularity of ucp maps for rigidity phenomena. We first make the following definition. 
Let $\mathbb{E}$ denote the canonical conditional expectation on $L^{\infty}(B,\nu)\rtimes\Gamma$. We say that the action has the \say{Singular-Hereditary} property (abbreviated as SH) if for every $\Gamma$-invariant von Neumann algebra $\mathcal{M}\subset L^{\infty}(B,\nu)\rtimes\Gamma $, either $\mathbb{E}|_{\mathcal{M}}$ is $\Gamma$-singular as a ucp map or, $\nu\circ\mathbb{E}|_{\mathcal{M}}$ is a $\Gamma$-invariant state. In our first main result, we use the SH-property, combined with Zimmer amenability, to conclude that all the invariant subalgebras of $C_r^*(\Gamma)$, or of $L(\Gamma)$ are co-amenable.
\begin{theorem}
\thlabel{singularityimpliescoamenability}
Let $\Gamma$ be a countable discrete group. Assume that there exists a non-singular $\Gamma$-space $(B,\nu)$ which has the SH-property and is Zimmer amenable. Then every non-trivial $\Gamma$-$C^*$-subalgebra $\mathcal{A}\subset C_r^*(\Gamma)$ is co-amenable. Similarly, every $\Gamma$-invariant von Neumann subalgebra $\mathcal{M}\subset L(\Gamma)$ is  co-amenable.   \end{theorem}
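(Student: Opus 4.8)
The plan is to work inside the crossed product $\cB := \Cross$, in which both $L(\Gamma)$ and $\csr$ sit as the subalgebras generated by the canonical unitaries $(u_g)_{g\in\Gamma}$, and to transport amenability information from $\cB$ down to the inclusion $\M\le L(\Gamma)$. The first point is that the $\Gamma$-action on $L(\Gamma)$ under which $\M$ is invariant is exactly the restriction of $\Ad(u_g)$, i.e.\ the conjugation action $g\cdot x = u_g x u_g^{*}$; hence a $\Gamma$-invariant $\M\le L(\Gamma)$ is automatically a $\Gamma$-invariant von Neumann subalgebra of $\cB$, and the standing hypotheses apply to it. I would first reduce the $C^{*}$-statement to the von Neumann statement: given a non-trivial $\Gamma$-invariant $\cA\le\csr$, its weak closure $\M=\cA''\le L(\Gamma)$ is $\Gamma$-invariant, and co-amenability of $\M\subseteq L(\Gamma)$ restricts to co-amenability of $\cA\subseteq\csr$ by pulling the relevant central state back along $\csr\hookrightarrow L(\Gamma)$.

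Two inputs drive the argument. \emph{Zimmer amenability} of $(B,\nu)$ makes $\cB$ an amenable (injective) von Neumann algebra and, more usefully, provides a $\Gamma$-equivariant conditional expectation, i.e.\ a hypertrace associated to the reference state $\nu\circ\bE$; this is the mechanism by which any inclusion into $\cB$ becomes relatively amenable. The \emph{SH-property} is what guarantees compatibility with the trace. Indeed, applying the dichotomy to $\M$ viewed in $\cB$: since $\M\le L(\Gamma)$ the expectation $\bE|_{\M}$ takes values in the scalars $\bC 1_B$ and equals $\tau(\cdot)1_B$, so for a.e.\ $b$ the fibre state $(\bE|_{\M})_b$ is the canonical trace $\tau|_{\M}$. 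The trace is invariant under conjugation, so these fibre states coincide with all of their $\Gamma$-translates and can therefore never be mutually singular. Hence the singular alternative of the SH-property is excluded, and we are necessarily in the second case: $\nu\circ\bE|_{\M}=\tau|_{\M}$ is a $\Gamma$-invariant state on $\M$.

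With these two facts in hand I would assemble the co-amenability datum in the sense of Anantharaman-Delaroche--Monod--Popa, namely an $L(\Gamma)$-central state on the basic construction $\langle L(\Gamma),e_{\M}\rangle$ whose restriction to $L(\Gamma)$ is the trace $\tau$. Starting from the hypertrace furnished by injectivity of $\cB$ (a $\cB$-central state on $B(L^{2}(\cB,\nu\circ\bE))$ extending $\nu\circ\bE$), I would restrict it to the algebra generated by $L(\Gamma)$ and the Jones projection $e_{\M}$, average over $\Gamma$ using the invariance extracted from the SH-step (a Day-type weak-$*$ limit along an approximately invariant net), and check that the resulting state is $L(\Gamma)$-central and normal on $\M$ with $\tau$-restriction. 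Non-triviality of $\cA$ (resp.\ $\M\ne\bC 1$) is used here to guarantee that this state is genuine, rather than degenerating to the false assertion that $\bC 1\subseteq\csr$ is co-amenable, which would amount to amenability of $\Gamma$.

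The main obstacle is the clash of types: $\cB$ is in general non-tracial (type III for a properly non-singular action), so the hypertrace it carries lives with respect to the state $\nu\circ\bE$, whereas the co-amenability I must produce for $\M\subseteq L(\Gamma)$ is with respect to the trace $\tau$ on $L(\Gamma)$. Bridging these requires that the reference data restrict compatibly to $L(\Gamma)$, and this compatibility is exactly what the SH-property secures by forbidding the singular (mutually-disjoint) behaviour of the fibre states --- singularity is precisely the obstruction to pushing the invariant mean down to a trace-compatible central state. The remaining technical work is to verify the centrality (bimodularity) and normality of the weak-$*$ limit state after the $\Gamma$-averaging, which I expect to follow from a standard convexity and weak-$*$ compactness argument once the singular alternative has been ruled out.
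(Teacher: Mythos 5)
Your proposal has a fatal gap: you apply the SH dichotomy to the wrong algebra. You run the dichotomy on $\mathcal{M}$ itself, viewed inside $L^{\infty}(B,\nu)\rtimes\Gamma$. But for \emph{any} von Neumann subalgebra of $L(\Gamma)$, the restriction $\mathbb{E}|_{\mathcal{M}}$ is automatically the scalar-valued map $\tau_0(\cdot)1$ (since $\mathbb{E}(\lambda_g)=0$ for $g\neq e$), so the alternative $\mathbb{E}(\mathcal{M})=\mathbb{C}$ holds trivially, for every group and every space. As you yourself observe, the fibre states are all equal to the trace and hence never mutually singular --- but this means your SH step extracts literally no information: the ``invariant state'' it produces is just $\tau_0|_{\mathcal{M}}$, which is conjugation-invariant for any $\Gamma$ whatsoever. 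Co-amenability in the sense used here is a statement about the commutant $\mathcal{A}'\subset\mathbb{B}(\ell^2(\Gamma))$ admitting a $\Gamma$-invariant state, and an invariant state on $\mathcal{M}$ says nothing about states on $\mathcal{M}'$. Since your SH step is vacuous, your argument effectively uses only Zimmer amenability; if it worked, it would apply verbatim to $\Gamma=\mathbb{F}_2$ acting on its Gromov boundary (a Zimmer-amenable action) and prove that every invariant subalgebra of $L(\mathbb{F}_2)$ is co-amenable --- false, since $L(\Lambda)$ for $\Lambda=\ker\bigl(\mathbb{F}_2\twoheadrightarrow PSL_2(\mathbb{Z})\bigr)$ is invariant and an invariant state on $L(\Lambda)'$ would restrict to an invariant mean on $\ell^{\infty}(\Gamma/\Lambda)$. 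Relatedly, your final assembly cannot work as described: a ``Day-type weak-$*$ limit along an approximately invariant net'' requires amenability of $\Gamma$, whereas the groups in question necessarily have trivial amenable radical; there is no averaging available, so the invariance must come for free from the state itself.

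The missing idea is to apply the SH dichotomy to the \emph{relative commutant} $\mathcal{M}_1=\mathcal{A}'\cap\bigl(L^{\infty}(B,\nu)\rtimes\Gamma\bigr)$, which is $\Gamma$-invariant but not contained in $L(\Gamma)$, so the dichotomy has genuine content there; this is what the paper does. The singular alternative is ruled out using non-triviality of $\mathcal{A}$: by \thref{singularlift}, if $\mathbb{E}|_{\mathcal{M}_1}$ were $\Gamma$-singular then $\mathbb{E}(a\lambda(g))=0$ for all $a\in\mathcal{A}$ and $g\neq e$, i.e.\ all off-diagonal matrix coefficients $\langle a\delta_g,\delta_h\rangle$ vanish, forcing $\mathcal{A}\subseteq\ell^{\infty}(\Gamma)\cap C_r^*(\Gamma)=\mathbb{C}$, a contradiction. (Note where non-triviality actually enters: here, not as a safeguard at the end.) Hence $\mathbb{E}(\mathcal{M}_1)=\mathbb{C}$, and by equivariance of $\mathbb{E}$ the state $\nu\circ\mathbb{E}$ is $\Gamma$-invariant on $\mathcal{M}_1$. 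Zimmer amenability is then used not to produce a hypertrace --- which indeed does not exist, since the crossed product is typically type III; the ``type clash'' you flag is real and is resolved by avoiding traces entirely --- but to produce a conditional expectation $\Phi:\mathbb{B}\bigl(\ell^2(\Gamma,L^2(B,\nu))\bigr)\to L^{\infty}(B,\nu)\rtimes\Gamma$; by \thref{relatingthecommutants}, $\Phi$ maps the commutant of $\mathcal{A}$ upstairs into $\mathcal{M}_1$, so $(\nu\circ\mathbb{E})\circ\Phi$ restricts to the desired $\Gamma$-invariant state on $\mathcal{A}'\subset\mathbb{B}(\ell^2(\Gamma))$. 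Your reduction of the $C^*$-statement to the von Neumann statement via $\mathcal{A}''$ is fine in the commutant formulation (since $\mathcal{A}'=(\mathcal{A}'')'$), but everything after it needs to be replaced by the above.
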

Here, co-amenability of $\mathcal{A}$ is in the sense of \cite{KalPan}, namely, the commutant $\mathcal{A}'\subset\mathbb{B}(\ell^2(\Gamma))$ admits a $\Gamma$-invariant state (similarly for $\mathcal{M}\subset L(\Gamma)$). Kalantar-Panagopoulos proved the conclusion of \thref{singularityimpliescoamenability} for higher rank lattices using \say{Non-commutative-Nevo-Zimmer} theorem~\cite{BH19}. This neoteric result of Kalantar-Panagopoulos stirred our interest in this problem. 

It is worth pointing out that the non-commutative Nevo-Zimmer theorem \cite[Theorem~B]{BH19} made up one of the key ingredients in the work of \cite{KalPan}. It is known that such a structure theorem cannot hold for semisimple Lie groups admitting a rank one factor. However, there are examples of groups which are a product of rank one factors and yet have an action on a non-singular $\Gamma$ space which has SH-property (see \thref{groupadmmittingSHbutnotNCNZ}).

Let us also note that the conclusion of \thref{singularityimpliescoamenability} can be considered a non-commutative strengthening of Margulis' normal subgroup theorem. Indeed, any subgroup for which the conclusion holds is just-non-amenable (that is, all its normal subgroups are co-amenable). If we assume in addition that the group $\Gamma$ has property (T), then all $\Gamma$-invariant subalgebras are co-finite, and all normal subgroups are of finite index. We remark that the key to the Normal Subgroup Theorem (NST) lies in understanding the structure of the Furstenberg-Poisson boundary.

In addition, it follows from \thref{singularityimpliescoamenability} that if $\Gamma$ is a non-amenable group admitting a Zimmer amenable SH space, then $\Gamma$ has trivial amenable radical. Examples of groups that satisfy the conditions of \thref{singularityimpliescoamenability} can be found in \cite{BBHP,bader2021charmenability}, where NST was shown. In these examples, the source of such $\Gamma$-actions is the Furstenberg-Poisson boundary of a random walk on a locally compact group $(G,\mu)$ associated with $\Gamma$. The setup in these examples contrasts with the structure of higher rank lattices \cite{KalPan}, where the space is a Furstenberg-Poisson boundary of the group $\Gamma$ itself. \\

Since our classification in \thref{singularityimpliescoamenability} is dependent on the structure of the subalgebras of the crossed product $L^{\infty}(B,\nu)\rtimes\Gamma$, we conjecture the following for higher rank lattices.

\begin{conjecture*}
\thlabel{conj}
Let $\Gamma$ be an irreducible lattice in a higher rank semisimple Lie group $G$ with a finite center and no non-trivial compact factor, all of whose simple factors have real rank of at least two. Let $(G/P,\nu_P)$ be the Furstenberg-Poisson boundary associated with a random walk $\mu$ on $\Gamma$. Then, every $\Gamma$-invariant subalgebra of the crossed product $L^{\infty}(G/P,\nu_P)\rtimes\Gamma$ is of the form $L^{\infty}(G/Q,\nu_Q)\rtimes\Lambda$, where $\Lambda\triangleleft\Gamma$ and $Q$ is a Parabolic subgroup of $G$.
\end{conjecture*}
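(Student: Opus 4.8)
The plan is to decouple the problem into a ``space direction'' and a ``group direction,'' mirroring the conjectured form $L^\infty(G/Q,\nu_Q)\rtimes\Lambda$. Given a $\Gamma$-invariant subalgebra $\mathcal{M}\le L^\infty(G/P,\nu_P)\rtimes\Gamma$, I would first isolate its space part by setting $\mathcal{A}:=\mathcal{M}\cap L^\infty(G/P,\nu_P)$, which is a $\Gamma$-invariant von Neumann subalgebra of $L^\infty(G/P,\nu_P)$ since both $\mathcal{M}$ and the base are $\Gamma$-invariant. Because the $\Gamma$-action on $(G/P,\nu_P)$ is essentially free and ergodic, $L^\infty(G/P,\nu_P)$ is a Cartan subalgebra of the crossed product, and the measurable (equivalently, non-commutative) form of Margulis' factor theorem for higher-rank lattices then forces $\mathcal{A}=L^\infty(G/Q,\nu_Q)$ for a unique parabolic $Q\ge P$. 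This is exactly the point at which the hypothesis that every simple factor has real rank at least two enters, as it is what makes the intermediate-factor theorem available.

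The second step is the decoupling itself: to show $L^\infty(G/Q,\nu_Q)\subseteq\mathcal{M}\subseteq L^\infty(G/Q,\nu_Q)\rtimes\Gamma$. The first inclusion is $\mathcal{A}\subseteq\mathcal{M}$ by construction. For the second --- that every Fourier coefficient (relative to the canonical expectation $\mathbb{E}$) of an element of $\mathcal{M}$ already lies in $L^\infty(G/Q,\nu_Q)$ rather than in the larger algebra $L^\infty(G/P,\nu_P)$ --- I would apply the non-commutative Nevo--Zimmer structure theorem, which is available here precisely because $G$ has no rank-one factor. Concretely, the dichotomy should say that $\mathbb{E}|_{\mathcal{M}}$ is either $\Gamma$-singular or $\Gamma$-invariant over the intermediate base $\mathcal{A}$, and the singularity of the boundary $G/P$ relative to the factor $G/Q$ should then confine the coefficients to $L^\infty(G/Q,\nu_Q)$, upgrading $\mathcal{M}$ to an intermediate subalgebra over the base $L^\infty(G/Q,\nu_Q)$.

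Once $\mathcal{M}$ is intermediate, $L^\infty(G/Q,\nu_Q)\subseteq\mathcal{M}\subseteq L^\infty(G/Q,\nu_Q)\rtimes\Gamma$, I would invoke the intermediate-subalgebra classification described in the abstract (the same mechanism behind \thref{singularityimpliescoamenability}, now run with base $G/Q$, which inherits the \emph{S.H.}-property from the Nevo--Zimmer analysis of Step 2). This produces a subgroup $\Lambda\le\Gamma$ with $\mathcal{M}=L^\infty(G/Q,\nu_Q)\rtimes\Lambda$. Normality of $\Lambda$ is then forced by the $\Gamma$-invariance of $\mathcal{M}$: conjugating $u_\lambda\in\mathcal{M}$ by $u_\gamma$ keeps it in $\mathcal{M}$, so $\gamma\Lambda\gamma^{-1}=\Lambda$ and $\Lambda\triangleleft\Gamma$. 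Co-amenability of $\Lambda$ is automatic from \thref{singularityimpliescoamenability}, and if $\Gamma$ has property (T) then $\Lambda$ has finite index, recovering the Normal Subgroup Theorem inside the classification.

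The main obstacle I expect is the decoupling in Step 2, namely establishing $\mathcal{M}\subseteq L^\infty(G/Q,\nu_Q)\rtimes\Gamma$. This is the assertion that the space part $\mathcal{A}=\mathcal{M}\cap L^\infty(G/P,\nu_P)$ already controls the entire algebra, and it is a genuine rigidity statement rather than a formal one --- it fails for general crossed products and should require the full strength of singularity together with a \emph{relative} version of Nevo--Zimmer over the intermediate boundary $G/Q$. A secondary difficulty is verifying that $G/Q$ again satisfies the hypotheses (essential freeness, ergodicity, and the \emph{S.H.}-property) needed to run the intermediate classification of Step 3 on the smaller base; this amounts to checking that the conjectured form is stable under passing from $G/P$ to its factors.
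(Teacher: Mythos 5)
This statement is a conjecture in the paper: the authors do not prove it, and they explicitly say that they can address it only under an additional technical hypothesis. So your proposal should be measured against the paper's partial progress (Section 5), and against the standard of an actual proof. It falls short of the latter, and the shortfall is concentrated exactly where you say it is: Step 2. Your Steps 1, 3, and 4 do track the paper's intended reduction --- the paper's \thref{condinv} shows $\mathbb{E}(\mathcal{M})\subseteq\mathcal{M}$ (so that $\mathbb{E}(\mathcal{M})=\mathcal{M}\cap L^\infty(G/P,\nu_P)$, your $\mathcal{A}$), and once one knows $L^\infty(G/Q,\nu_Q)\subseteq\mathcal{M}\subseteq L^\infty(G/Q,\nu_Q)\rtimes\Gamma$, the classification \thref{comdessdprop} indeed finishes the argument, normality of $\Lambda$ included. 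But the inclusion $\mathcal{M}\subseteq\mathbb{E}(\mathcal{M})\rtimes\Gamma$ is not a step you can wave at: it is the entire content of the conjecture, and the paper itself only obtains it conditionally (\thref{upperboundforM}), assuming the existence of a $\Gamma$-equivariant normal ucp map $\Phi:\langle\mathcal{M},L(\Gamma)\rangle\to\mathbb{E}(\mathcal{M})\rtimes\Gamma$, which it then exploits via the identification $\langle\mathcal{M},L(\Gamma)\rangle=L^\infty(G/Q,\nu_Q)\rtimes\Gamma$ (Houdayer's Corollary F) and tightness of $\mu$-boundaries; the authors state plainly that they do not know how to produce such a map.

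Moreover, the mechanism you propose for Step 2 is misdirected. The non-commutative Nevo--Zimmer theorem produces a $\Gamma$-equivariant embedding of some $L^\infty(G/Q,\nu_Q)$ \emph{into} $\mathcal{M}$ --- a lower bound --- and that is precisely how the paper proves \thref{condinv}; it gives no control whatsoever on the Fourier coefficients $\mathbb{E}(x\lambda(g)^*)$ of elements $x\in\mathcal{M}$, i.e., no upper bound. Likewise, the singularity dichotomy (the SH-property, via \thref{singularlift}) applies to the \emph{relative commutant} of $\mathcal{M}$ and, in the regime relevant here (where $\mathbb{E}(\mathcal{M})$ is large), it is the invariant-state branch of the dichotomy that is active, so singularity cannot "confine the coefficients" as you hope. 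A secondary inaccuracy: in Step 3 you do not need $(G/Q,\nu_Q)$ to "inherit" the SH-property; \thref{comdessdprop} is formulated for an essentially free factor $(X,\xi)$ of an SH-space $(B,\nu)$, and here $(G/P,\nu_P)$ is the SH-space while $(G/Q,\nu_Q)$ is merely its factor. In sum: you have correctly identified the skeleton and correctly located the obstruction, but acknowledging the obstruction is not overcoming it; as written, your proposal establishes strictly less than the paper's conditional result, since it does not even isolate a precise sufficient condition under which the decoupling holds.
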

An affirmative answer to the above Conjecture would completely describe the $\Gamma$-invariant subalgebras of the crossed product. We provide a sufficient condition (\thref{upperboundforM}) which implies this conjecture.

Almost all the known results in this direction deal with \say{intermediate algebras} $\mathcal{M}$ of the form $L(\Gamma)\subset\mathcal{M}\subset\mathcal{N}\rtimes\Gamma$ (see e.g., \cite{A19,Suz,Houdayersurvey}). At the same time, there has been considerable work to describe intermediate algebras $\mathcal{M}$ of the form $\mathcal{N}\subset\mathcal{M}\subset \mathcal{N}\rtimes\Gamma$, where $\mathcal{N}$ is a von Neumann algebra (see for example, \cite{choda1978galois,izumi1998galois,Cameron2015BimodulesIC, cameron2016intermediate} to name a few). In this paper, we provide a similar kind of classification for $\Gamma$-invariant subalgebras of $L^{\infty}(X,\xi)\rtimes\Gamma$ containing $L^{\infty}(X,\xi)$, where $(X,\xi)$ is a non-trivial factor of an ergodic non-singular $\Gamma$-space $(B,\nu)$ satisfying the SH-property.
\begin{theorem}
\thlabel{comdessdprop}
Let $(B,\nu)$ be an ergodic non-singular $\Gamma$-space with the SH-property and, let $(X,\xi)$ be a non-trivial factor of $(B,\nu)$. Then, every $\Gamma$-invariant von Neumann algebra $\mathcal{M}$ with $L^{\infty}(X,\xi)\subset\mathcal{M}\subset L^{\infty}(X,\xi)\rtimes\Gamma$ is a crossed product of the form $L^{\infty}(X,\xi)\rtimes\Lambda$ for a normal subgroup $\Lambda\triangleleft\Gamma$.
\end{theorem}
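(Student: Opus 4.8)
The plan is to identify $\M$ with the crossed product by the subgroup it visibly contains and then show that nothing else can occur. Expand every element of $L^{\infty}(X,\xi)\rtimes\G$ in its Fourier series $m=\sum_{\gamma}a_{\gamma}(m)u_{\gamma}$ with $a_{\gamma}(m)=\bE(mu_{\gamma}^{*})\in L^{\infty}(X,\xi)$, where $\bE$ is the canonical expectation (restricted to $L^{\infty}(X,\xi)\rtimes\G$ it is the expectation onto $L^{\infty}(X,\xi)$). Put $\Lambda=\{\gamma\in\G:\ u_{\gamma}\in\M\}$. Since $\M$ is a $*$-algebra and $u_{\gamma}u_{\delta}=u_{\gamma\delta}$, $u_{\gamma}^{*}=u_{\gamma^{-1}}$, the set $\Lambda$ is a subgroup; since $\M$ is $\G$-invariant and $u_{g}u_{\lambda}u_{g}^{*}=u_{g\lambda g^{-1}}$, it is normal in $\G$. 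As $L^{\infty}(X,\xi)\subset\M$ we get $L^{\infty}(X,\xi)\rtimes\Lambda\subseteq\M$ for free, and the whole content is the reverse inclusion: if $a_{\gamma}(m)\ne0$ for some $m\in\M$, then $\gamma\in\Lambda$.

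First I would make $\M$ graded over $L^{\infty}(X,\xi)$, i.e. show $a_{\gamma}(m)u_{\gamma}\in\M$ for every $m\in\M$ and $\gamma\in\G$. Because the boundary action is essentially free, $L^{\infty}(X,\xi)$ is a MASA, in fact a Cartan subalgebra, of $L^{\infty}(X,\xi)\rtimes\G$, and a standard bimodule argument then yields this decomposition. Consequently the weak-$*$ closed $L^{\infty}(X,\xi)$-module $\overline{\{a_{\gamma}(m):m\in\M\}}$ equals $\mathbf{1}_{S_{\gamma}}L^{\infty}(X,\xi)$ for a measurable $S_{\gamma}\subseteq X$, and the partial isometry $v_{\gamma}=\mathbf{1}_{S_{\gamma}}u_{\gamma}$ lies in $\M$, with $v_{\gamma}^{*}v_{\gamma}=\mathbf{1}_{S_{\gamma}}$ and $v_{\gamma}v_{\gamma}^{*}=\mathbf{1}_{\gamma S_{\gamma}}$. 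Multiplicativity and $\G$-invariance of $\M$ translate into the groupoid-type relations $S_{\gamma}\cap\gamma S_{\delta}\subseteq S_{\gamma\delta}$, $S_{\gamma^{-1}}=\gamma^{-1}S_{\gamma}$, and, crucially, the conjugation-equivariance $S_{g\gamma g^{-1}}=g\,S_{\gamma}$ for all $g\in\G$; also $S_{e}=X$. The goal now reads: $S_{\gamma}\in\{\emptyset,X\}$ for every $\gamma$, since $S_{\gamma}=X$ forces $v_{\gamma}=u_{\gamma}\in\M$, that is $\gamma\in\Lambda$.

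To produce the required dichotomy I would feed $\M$ into the \emph{SH}-property: as a $\G$-invariant subalgebra of $L^{\infty}(B,\nu)\rtimes\G$, either $\bE|_{\M}$ is $\G$-singular or $\nu\circ\bE|_{\M}$ is a $\G$-invariant state. Restricting the latter to $L^{\infty}(X,\xi)\subset\M$ gives $\int_{X}(\gamma\cdot f)\,d\xi=\int_{X}f\,d\xi$, i.e. $\G$-invariance of $\xi$; this is the tracial situation, where $L^{\infty}(X,\xi)$ is Cartan with respect to the canonical trace and the conclusion follows from the trace-preserving expectation together with the ergodicity of $(X,\xi)$. In the remaining case $\bE|_{\M}$ is $\G$-singular, and this is the decisive input. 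Using only ergodicity of $(X,\xi)$, the conjugation-invariant union $R_{\gamma}=\bigcup_{g\in\G}S_{g\gamma g^{-1}}$ over the conjugacy class of $\gamma$ is $\G$-invariant, hence $R_{\gamma}\in\{\emptyset,X\}$; this already kills every $\gamma$ whose entire conjugacy class contributes nothing. To upgrade this to triviality of each \emph{individual} $S_{\gamma}$, the $\G$-singularity of $\bE|_{\M}$ must be exploited: a proper positive-measure $S_{\gamma_{0}}$ makes $v_{\gamma_{0}}$ a nonzero inner partial intertwiner relating the field of states $\psi_{x}=\mathrm{ev}_{x}\circ\bE|_{\M}$ to its $\gamma_{0}$-translate $\psi_{\gamma_{0}x}$ over the corner $\mathbf{1}_{S_{\gamma_{0}}}$, which one wants to play against the mutual singularity of $\psi_{x}$ and $\psi_{\gamma_{0}x}$. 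Granting $S_{\gamma}\in\{\emptyset,X\}$, every $m\in\M$ is Fourier-supported on $\Lambda$, whence $\M\subseteq L^{\infty}(X,\xi)\rtimes\Lambda$ and equality holds with $\Lambda\triangleleft\G$.

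The main obstacle is exactly the last step of the third paragraph: converting singularity into the rigidity that forbids proper coefficient supports. The delicacy is that ruling out a genuine proper $S_{\gamma_{0}}$ — equivalently, a normal sub-groupoid of the transformation groupoid that is not an honest subgroup — is a phenomenon that \emph{fails} for general free ergodic p.m.p. actions, so the ergodic-theoretic conclusion $R_{\gamma}\in\{\emptyset,X\}$ cannot by itself suffice and the singular/boundary structure must be used essentially. Concretely, one must pin down the correct non-commutative notion of mutually singular states underlying the \emph{SH}-property and explain why an inner partial isometry of $\M$ carrying $\psi_{x}$ onto the translate $\psi_{\gamma_{0}x}$ is incompatible with it; I expect this to require the stationarity/proximality of $(X,\xi)$ as a boundary factor, not merely ergodicity, and this is where the real work of the proof resides.
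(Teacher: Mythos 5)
Your reduction to the dichotomy $S_{\gamma}\in\{\emptyset,X\}$ is sound, and your normal subgroup $\Lambda$ and the grading via the Cartan inclusion match the paper's framework. The genuine gap is exactly where you place it, but the reason it is a dead end is worth naming: you feed $\mathcal{M}$ itself into the \emph{SH}-dichotomy. Since $\mathbb{E}(\mathcal{M})\supseteq L^{\infty}(X,\xi)\neq\mathbb{C}$, the invariant branch is impossible (with the intro's phrasing it degenerates into the p.m.p.\ case, which, as you yourself observe, cannot be settled by ergodicity alone), so you always land in the branch where $\mathbb{E}|_{\mathcal{M}}$ is $\Gamma$-singular. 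But $\Gamma$-singularity of $\mathbb{E}|_{\mathcal{M}}$ is insensitive to the structure you are trying to detect: it holds for $\mathcal{M}=L^{\infty}(X,\xi)\rtimes\Gamma$ and for every $L^{\infty}(X,\xi)\rtimes\Lambda$ with $\Lambda$ non-trivial, all of which are full of partial isometries $\mathbf{1}_{S}u_{\gamma}$. So no argument can turn singularity of the states $\psi_{x}$ on $\mathcal{M}$ alone into the rigidity $S_{\gamma}\in\{\emptyset,X\}$; the hoped-for contradiction with a \emph{proper} $S_{\gamma_{0}}$ is precisely the step you cannot supply, and the extra stationarity/proximality input you anticipate is not what the paper uses.

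The missing idea is to apply the \emph{SH}-dichotomy to the relative commutant $\tilde{\mathcal{M}}=\mathcal{M}'\cap\left(L^{\infty}(X,\xi)\rtimes\Gamma\right)$ rather than to $\mathcal{M}$. By essential freeness, $L^{\infty}(X,\xi)$ is maximal abelian in the crossed product, so $\tilde{\mathcal{M}}\subseteq L^{\infty}(X,\xi)$ and hence $\mathbb{E}(\tilde{\mathcal{M}})=\tilde{\mathcal{M}}$; moreover $\tilde{\mathcal{M}}$ is $\Gamma$-invariant. If $\mathbb{E}(\tilde{\mathcal{M}})=\mathbb{C}$, then $\tilde{\mathcal{M}}=\mathbb{C}$, and now one uses the faithful normal conditional expectation $\mathbb{E}_{\mathcal{M}}:L^{\infty}(X,\xi)\rtimes\Gamma\to\mathcal{M}$ (which exists because the inclusion is Cartan) together with the elementary \thref{relationtocommutant}: $\Gamma$-invariance of $\mathcal{M}$ gives $\mathbb{E}_{\mathcal{M}}(u_{\gamma})u_{\gamma}^{*}\in\tilde{\mathcal{M}}=\mathbb{C}$. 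Since $\mathbb{E}_{\mathcal{M}}(u_{\gamma})$ is exactly your $\mathbf{1}_{S_{\gamma}}u_{\gamma}$, this \emph{is} the dichotomy $S_{\gamma}\in\{\emptyset,X\}$, obtained purely algebraically, with no singularity needed in this branch; a $\|\cdot\|_{2}$-approximation argument (apply $\mathbb{E}_{\mathcal{M}}$ to finite Fourier sums approximating $x\in\mathcal{M}$) then yields $\mathcal{M}\subseteq L^{\infty}(X,\xi)\rtimes\Lambda$. If instead $\mathbb{E}|_{\tilde{\mathcal{M}}}$ is $\Gamma$-singular, then \thref{singularlift} — whose proof exploits precisely the fact that every $a\in\mathcal{M}$ \emph{commutes} with the positive elements of $\tilde{\mathcal{M}}$ witnessing singularity — forces $\mathbb{E}(a\lambda(g))=0$ for all $a\in\mathcal{M}$ and $g\neq e$, so $\mathcal{M}=L^{\infty}(X,\xi)$ and $\Lambda=\{e\}$. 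In short: singularity enters through commutation against the relative commutant, never through the states on $\mathcal{M}$ itself, and this is the pivot your proposal is missing.
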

Notice that since $\mathcal{M}$ is not assumed to contain $L(\Gamma)$, it is not automatically $\Gamma$-invariant. Moreover, we cannot say anything about intermediate algebras that are not $\Gamma$-invariant.

\subsection*{Acknowledgement}
We express our gratitude towards Mehrdad Kalantar, Yongle Jiang, and Hanna Oppelmayer for numerous enlightening discussions. We also thank the anonymous referee for taking the time to carefully read our manuscript, and for his/her numerous suggestions and corrections, which have improved the exposition enormously.   
\section{Preliminaries}
Let $\Gamma$ be a discrete countable group and $\mathcal{A}$ be an unital $\Gamma$-$C^*$-algebra. By this, we mean a  $C^*$-algebra $\mathcal{A}$ endowed with the action  $\Gamma\curvearrowright \mathcal{A}$ by $*$-automorphisms such that the map $\Gamma \times \mathcal{A}\xrightarrow{} \mathcal{A}$ which sends $(g, x)\xrightarrow[]{}
g.x$ is continuous. For a von Neumann algebra $\mathcal{M}$ with separable predual $\mathcal{M}_{*}$, we endow $\mathcal{M}$ with the
ultraweak (i.e., weak$^*$) topology coming from the canonical identification $\mathcal{M} =(\mathcal{M}_{*})^*$. Via this identification, $\mathcal{M}_*$ (as a subset of $\mathcal{M}^*$) consists of all ultraweakly continuous linear
functionals, also called normal linear functionals. By a $\Gamma$-von Neumann algebra $\mathcal{M}$, we mean a von Neumann algebra $\mathcal{M}$ equipped with an action  $\Gamma\curvearrowright \mathcal{M}$ by $*$-automorphisms such that the map $\Gamma \times \mathcal{M}\xrightarrow{} \mathcal{M}$ which sends $(g, x)\xrightarrow[]{}
g.x$ is continuous. We briefly recall the notion of boundary structure as defined in \cite{BBHP}. 
\stoptocentries
We denote by $\mathcal{S}\left(\mathcal{A}\right)$, the set of all states on $\mathcal{A}$.
Let us recall the notion of singular states.
The states $\tau,\tilde{\tau}\in\mathcal{S}\left(\mathcal{A}\right)$ are said to be \textbf{singular} ($\tau\perp\tilde{\tau}$) if there exists a net $0\le a_i\le 1\in\mathcal{A}$ such that $\lim_i\tau(a_i)=1$ and $\lim_i\tilde{\tau}(a_i)=0$.

Let $\Phi: \mathcal{M}\to L^{\infty}(B,\nu)$ be a  $\Gamma$-equivariant ucp map. Upon restricting to an ultraweakly dense $\Gamma$-invariant separable $C^*$-subalgebra $\tilde{\mathcal{A}}$, we obtain a $\Gamma$-equivariant map $\theta: B\to \mathcal{S}(\tilde{\mathcal{A}})$. Moreover, for $\nu$-almost every $b\in B$, $\theta(b)\in \mathcal{S}(\tilde{\mathcal{A}})$ is defined by $\Phi(a)(b)=\theta(b)(a)$ for $a\in\tilde{\mathcal{A}}$. We say that $\Phi$ is $\Gamma$-\textbf{singular} if $
s.\theta(b)\perp\theta(b)$ for almost every $b\in B$ and $s\in\Gamma\setminus\{e\}$ (see e.g., \cite[Definition~3.6]{Houdayersurvey}). In particular, that there exists a net $\tilde{a_i}\in \tilde{\mathcal{A}}\cap\mathcal{M}$ with $0\le \tilde{a_i}\le 1$ such that $\lim_i\theta(b)(\tilde{a_i})=1$ and $\lim_ig.\theta(b)(\tilde{a_i})=0$. It follows from \cite[Proposition~4.10]{BBHP} that the notion of $\Gamma$-singularity of $\Phi$ is independent of the choice of $\Tilde{\mathcal{A}}$. Moreover, we say that $\Phi$ is invariant if $\Phi(\mathcal{M})=\mathbb{C}$\subsection*{Weak topologies} We now turn to recall the notions of weak topology and ultraweak-topology on the set of $\mathbb{B}(\mathcal{H})$ of bounded linear maps on $\mathcal{H}$. The readers can refer to \cite{takesaki} for more details on these. The weak operator topology (abbreviated as WOT) is generated by open sets of the form 
\[\left\{T\in\mathbb{B}(\mathcal{H}): \left|\langle(T-T_0)\xi,\eta\rangle\right|<\epsilon\right\},\]
where $T_0\in \mathbb{B}(\mathcal{H}),\xi,\eta\in\mathcal{H}$ and $\epsilon>0$. The ultraweak topology (also known as $\sigma$-weak topology) is the topology induced by the open sets of the form 
\[\left\{T\in\mathbb{B}(\mathcal{H}): \left|\sum_i\langle(T-T_0)\xi_i,\eta_i\rangle\right|<\epsilon\right\},\]
where $T_0\in \mathbb{B}(\mathcal{H}),\xi_i,\eta_i\in\mathcal{H}$ with $\sum_i\|\xi_i\|^2,\sum_i\|\eta_i\|^2<\infty$ and $\epsilon>0$.\\
\noindent
On the closed unit ball of $\mathbb{B}(\mathcal{H})$, the ultraweak topology and the WOT coincide (see \cite[Chapter-II,~Lemma~2.5]{takesaki}).
\subsection*{Ultraweakly dense $C^*$-subalgebra of a von Neumann algebra} Given a von Neumann algebra $\mathcal{M}$ acting on a separable Hilbert space $\mathcal{H}$, we can find an ultraweakly dense $C^*$-subalgebra $\mathcal{A}\subset\mathcal{M}$. We can also choose $\mathcal{A}$ to be separable (in the norm) as well. We shall refer to such a $C^*$-subalgebra as a \say{separable model} of $\mathcal{M}$.  We include a proof of this fact.
\begin{prop}
\thlabel{compactmodel}
Let $\mathcal{M}$ be a von Neumann algebra action on a separable Hilbert space $\mathcal{H}$. Then, there exists a unital $C^*$-algebra $\mathcal{A}\subset\mathcal{M}$ such that $\mathcal{A}$ is ultraweakly dense inside $\mathcal{M}$. Moreover, $\mathcal{A}$ is separable in the norm-topology. In particular, $\mathcal{M}$ has a separable model. 
\begin{proof}
Let $\mathbb{B}_1$ denote the closed unit ball of $\mathbb{B}(\mathcal{H})$. It is compact in WOT, hence in the ultraweak-topology. Since $\mathcal{H}$ is separable, the ultraweak-topology on $\mathbb{B}_1$ is metrizable (see \cite[Chapter-II,~Proposition~2.7]{takesaki}). Therefore, $\mathbb{B}_1$ is separable in the ultraweak topology. Since subsets of separable sets are separable in metric spaces, the unit ball $\mathcal{M}_1$ of $\mathcal{M}$ is separable in the ultraweak topology. Let $A=\{a_n: n\in\mathbb{N}\}$ be a countable dense (in the ultraweak-topology) subset of $\mathcal{M}_1$. By adjoining the unit of $\mathcal{M}$ to $A$ if required, we can assume that $A$ contains the unit of $\mathcal{M}$. Let $\mathcal{A}$ be the unital $C^*$-algebra generated by $A$. We observe that 
\[\left\{\sum_{j=1}^m(c_j+id_j)a_{1j}\ldots a_{nj}: n,m\in\mathbb{N}, c_j,d_j\in\mathbb{Q}\text{ and }a_{1j},\ldots,a_{nj}\in A\cup A^*\right\}\]
is a countable dense subset of $\mathcal{A}$ in the norm-topology. Hence, $\mathcal{A}$ is separable. We now show that $\mathcal{A}$ is ultraweakly dense in $\mathcal{M}$. Let $x\in\mathcal{M}$ and $\epsilon>0$ be given. Consider a basic open set $W^{\epsilon,x}_{\varphi_1,\ldots,\varphi_n}$ around $x$. Note that
\[W^{\epsilon,x}_{\varphi_1,\ldots,\varphi_n}=\left\{y\in\mathcal{M}: \left|\varphi_i(x-y)\right|<\epsilon,~i=1,2,\ldots,n\right\}.\]
Moreover, for each $i=1,2,\ldots,n$, $\varphi_i\in\mathbb{B}(\mathcal{H})_*$ is given by 
\[\varphi_i(\cdot)=\sum_{j}\left\langle(\cdot)\xi^i_j,\eta^i_j\right\rangle,~\xi^i_j,\eta^i_j\in\mathcal{H},~\sum_j\|\xi^i_j\|^2,~\sum_j\|\eta^i_j\|^2<\infty.\]
Let $m\in\mathbb{N}$ be such that $\|x\|<m$. Then, $\frac{1}{m}x\in\mathcal{M}_1$. Since $A$ is ultraweakly dense inside $\mathcal{M}_1$, there exists $n_0\in\mathbb{N}$ such that $$a_{n_0}\in W^{\frac{\epsilon}{m},\frac{x}{m}}_{\varphi_1,\ldots,\varphi_n}=\left\{y\in\mathcal{M}: \left|\varphi_i\left(\frac{1}{m}x-y\right)\right|<\frac{\epsilon}{m},~i=1,2,\ldots,n\right\}.$$This in particular implies that for each $i=1,2,\ldots,n$,
\[\left|\varphi_i\left(x-ma_{n_0}\right)\right|=m\left|\varphi_i\left(\frac{1}{m}x-a_{n_0}\right)\right|<m\frac{\epsilon}{m}=\epsilon.\]
Therefore, $ma_{n_0}\in W^{\epsilon,x}_{\varphi_1,\ldots,\varphi_n}$. Since $ma_{n_0}\in\mathcal{A}$, it follows that $\mathcal{A}$ is ultraweakly dense in $\mathcal{M}$.
\end{proof}
\end{prop}

\subsection*{Crossed product von Neumann algebra}
We briefly recall the construction of the crossed product von Neumann algebra. Let $\mathcal{M}$ be a $\Gamma$-von Neumann algebra. 
Given a Hilbert space $\mathcal{H}$, let $\ell^2(\Gamma,\mathcal{H})$ be the space of square summable $\mathcal{H}$-valued functions on $\Gamma$, i.e.,
\[\ell^2(\Gamma,\mathcal{H})=\left\{\xi:\Gamma\to \mathcal{H}\text{ such that }\sum_{h\in\Gamma}\|\xi(h)\|_{\mathcal{H}}^2<\infty.\right\}\]
There is a natural action $\Gamma\curvearrowright \ell^2(\Gamma,\mathcal{H})$ by left translation:
\[\lambda_g\xi(h):=\xi(g^{-1}h), \xi \in \ell^2(\Gamma,\mathcal{H}), g,h \in \Gamma\]
Given a faithful $*$-representation $\pi : \mathcal{M}\to \mathbb{B}(\mathcal{H})$ of a $\Gamma$-von Neumann algebra $\mathcal{M}$ into the space
of bounded operators on the Hilbert space $\mathcal{H}$, let $\sigma$ be the $*$-representation \[\sigma:\mathcal{M} \to B(\ell^2(\Gamma,\mathcal{H}))\] defined by \[\sigma(a)(\xi)(h):=\pi(h^{-1}a)\xi(h), a \in \mathcal{M}\]
where $\xi \in \ell^2(\Gamma,\mathcal{H})$, $h \in \Gamma$. The von Neumann crossed product
$\mathcal{M}\rtimes\Gamma$ is generated (as a von Neumann algebra inside $\mathbb{B}(\ell^
2
(\Gamma, \mathcal{H})$), by the left regular representation $\lambda$ of $\Gamma$ and the faithful $*$-representation $\sigma$ of $\mathcal{M}$ in $\mathbb{B}(\ell^2
(\Gamma, \mathcal{H}))$.
Moreover, this representation translates the action $\Gamma \curvearrowright\mathcal{M}$ into an inner action
by the unitaries $\{\lambda(g), g \in \Gamma\}$. It follows from the construction that $\mathcal{M}\rtimes\Gamma$ contains $L(\Gamma)$ as as a von Neumann-subalgebra. The von Neumann crossed product $\mathcal{M}\rtimes\Gamma$ comes equipped with a $\Gamma$-equivariant faithful normal conditional expectation $\mathbb{E}:\mathcal{M}\rtimes\Gamma\to\mathcal{M}$ defined by 
\[\mathbb{E}\left(\sigma(a_g)\lambda_g\right)=\left\{ \begin{array}{ll}
0 & \mbox{if $g\ne e$}\\
\sigma(a_e) & \mbox{otherwise}\end{array}\right\}\]
We are now ready to define an SH-space.
\begin{definition}[Singular Hereditary Space]
Let $(B,\nu)$ be an ergodic non-singular $\Gamma$-space. We say that the action $\Gamma\curvearrowright (B,\nu)$ has \say{singular hereditary property} if for every $\Gamma$-invariant von Neumann algebra $\mathcal{M}\subset L^{\infty}(B,\nu)\rtimes\Gamma$, either $\mathbb{E}|_{\mathcal{M}}$ is $\Gamma$-singular or $\mathbb{E}\left(\mathcal{M}\right)=\mathbb{C}$. In this case, we say that $(B,\nu)$ is an SH-space. 
\end{definition}   

One can view the definition of SH-spaces as a non commutative analog of the case where the action on $(B,\nu)$, and on all of its non trivial factors, is essentially free. Examples of SH-spaces originate from the works of \cite{BBHP} and \cite{bader2021charmenability}.
\begin{example}
\thlabel{genconditionforsh}
Let $\Gamma$ be a discrete group having trivial amenable radical which satisfies the condition~$(a)$ in \cite[Proposition~4.17]{BBHP}. We point out that concrete examples of such groups have been provided in \thref{groupadmmittingSHbutnotNCNZ}. We now claim that the space $(B,\nu)$ mentioned there is an SH-space. Indeed, let $\mathcal{M}$ be a $\Gamma$-invariant subalgebra of $L^{\infty}(B,\nu)\rtimes\Gamma$ and $\mathbb{E}$ be the canonical conditional expectation associated with the crossed product. Then, letting $M=\M$ and $E=\mathbb{E}$, it follows from condition~$(a)$ that either $E|_{M}$ is either $\Gamma$-singular or invariant. Suppose that $E|_{M}$ is not $\Gamma$-singular.  Then, $E|_{M}$ being invariant in the sense of \cite[Definition~4.1]{BBHP} means that $E(M)\subset L^{\infty}(B,\nu)^{\Gamma}$. Since $(B,\nu)$ is an ergodic space (even metrically ergodic), it follows that $E(M)=\mathbb{C}$. 
\end{example}

We now discuss an example of a group for which the non-commutative Nevo-Zimmer theorem does not hold and admits an SH-space. 
\begin{example}\cite[Theorem~D]{BBHP}
\thlabel{groupadmmittingSHbutnotNCNZ}
Let $T$ be a bi-regular tree. We denote by $\text{Aut}^{+}(T)$, the group of bi-coloring preserving automorphisms of $T$ which acts $2$-transitively on the boundary $\partial T$.
Assume that $n\ge 2$. For each $i=1,2,\ldots,n$, let $G_i$ be a closed subgroup of the bi-regular tree $\text{Aut}^{+}(T_i)$. Moreover, let $\Gamma$ be
a co-compact lattice in $G=G_1\times\ldots\times G_n$ with dense projections. Note that the non-commutative Nevo-Zimmer theorem does not hold for $\Gamma$. Now, for each $i=1,2,\ldots,n$, let $B_i=\partial T_i$. Moreover, equipped with the right measure $\nu_i$, $(B_i,\nu_i)$ is the Furstenberg-Poisson boundary of $G_i$ for some generating measure $\mu_i$ on $G_i$ (see the discussion in the proof of \cite[Theorem~D]{BBHP} and \cite[Theorem~5.1]{BadShal06}). It follows from \cite[Corollary~3.2]{BadShal06} that $(B,\nu)=\left(\prod_{i=1}^nB_i,\otimes_{i=1}^n\nu_i\right)$ is the Furstenberg-Poisson boundary of $G$. Arguing similarly as in the proof of \cite[Theorem~D]{BBHP}, we obtain that the action $\Gamma\curvearrowright (B,\nu)$ is ergodic and Zimmer-amenable. It follows from the $2$-transitivity assumption that the group $\Gamma$ has a trivial amenable radical. Now, it is shown in \cite[Theorem~D]{BBHP} that the group $\Gamma$ satisfies the condition~$(a)$ in \cite[Proposition~4.17]{BBHP}. As a consequence, it follows from \thref{genconditionforsh} that $(B,\nu)$ is an SH-space.
\end{example}
We also provide an example of a group to which the non-commutative Nevo-Zimmer theorem applies and, as an upshot, accedes an SH-space.  
\begin{example}\thlabel{example2}\cite{bader2021charmenability}
\noindent
Let $k$ be any local field. Let $\textbf{G}$ be any almost $k$-simple connected algebraic group with real rank $\text{rank}_k(\textbf{G})\ge 2$. Let $\textbf{P}<\textbf{G}$ be a minimal parabolic $k$-subgroup. Set $G=\textbf{G}(k)$ and $P=\textbf{P}(k)$. Let $\Gamma < G$ be a lattice, equipped with a Furstenberg measure, i.e., a measure for which there exists a measure $\nu_P$ on $G/P$ such that $(G/P,\nu_P)$ is a Furstenberg-Poisson boundary.  We shall argue that $(G/P,\nu_P)$ is an SH-space.  Let us begin by observing that the action $\Gamma\curvearrowright (G/P,\nu_P)$ is essentially free and ergodic (\cite[Lemma~6.2]{BBHP}). Let $\mathcal{M}\subset L^{\infty}(G/P,\nu_P)\rtimes\Gamma$ be an invariant subalgebra. Arguing similarly as in \cite[Lemma~2.16]{KalPan}, we see that the action $\Gamma\curvearrowright\M$  is ergodic, i.e., $\mathcal{M}^{\Gamma}=\mathbb{C}$. We can now appeal to \cite[Theorem~5.4]{bader2021charmenability} to conclude that either $\mathbb{E}(\M)=\mathbb{C}$ or $\mathbb{E}|_{\M}$ is $\Gamma$-singular.
\end{example}
Let us also note that we shall identify $\mathbb{B}(\ell^2(\Gamma))$ as a $\Gamma$-invariant subalgebra of $\mathbb{B}(\ell^2(\Gamma,\mathcal{H}))$. Under this identification, it immediately follows that for any $\Gamma$-invariant subalgebra $\mathcal{A}\subset \mathbb{B}(\ell^2(\Gamma))$, the relative commutant $\mathcal{A}'\cap \mathbb{B}(\ell^2(\Gamma))$ is contained inside $\tilde{\mathcal{A}}'$, the commutant of $\mathcal{A}$ inside $\mathbb{B}(\ell^2(\Gamma),\mathcal{H})$.

We end this section with the following easy observation which allows us to relate the commutant of $\mathcal{A}$ (or, $\M$) inside $\mathbb{B}\left(\ell^2(\Gamma, \cH\right)$ for $\cH=L^2(B,\nu)$ to that of the relative commutant inside $L^{\infty}(B,\nu)\rtimes\Gamma$.
\begin{lemma}
\thlabel{relatingthecommutants}
Let $(B,\nu)$ be a non-singular $\Gamma$-space. Suppose that $\mathcal{A}$ (or, $\M$) is a $\Gamma$-invariant $C^*$-subalgebra (or, von Neumann subalgebra) of $L^{\infty}(B,\nu)\rtimes\Gamma$ such that there exists a ucp map $\Phi:\mathbb{B}(\ell^2(\Gamma,L^2(B,\nu))\to L^{\infty}(B,\nu)\rtimes\Gamma$ with $\Phi|_{L^{\infty}(B,\nu)\rtimes\Gamma}=\text{id}$. 

Then, $\Phi$ maps $\mathcal{A}'\cap\mathbb{B}(\ell^2(\Gamma,L^2(B,\nu))$ (similarly, $\M'$) to the respective relative commutants inside $L^{\infty}(B,\nu)\rtimes\Gamma$. Moreover, the map $\Phi|_{\mathcal{A}'}$ or $\Phi|_{\mathcal{M}'}$ is surjective.
\begin{proof}
Let $\M$ be a $\Gamma$-invariant von Neumann subalgebra of $L^{\infty}(B,\nu)\rtimes\Gamma$.
Let $T\in\mathbb{B}(\ell^2(\Gamma,L^2(B,\nu))$ be such that $Tx=xT$ for all $x \in \mathcal{M}$. Then, applying $\Phi$ on both sides, we obtain that $\Phi(Tx)=\Phi(xT)$ for all $x\in \M$. Since $\Phi|_{L^{\infty}(B,\nu)\rtimes\Gamma}=\text{id}$, $L^{\infty}(B,\nu)\rtimes\Gamma$ falls in the multiplicative domain of $\Phi$ (see \cite[Proposition~1.5.7]{BroOza08}). Therefore, for all $x\in\mathcal{M}$,
we obtain that 
\begin{align*}\Phi(T)x=\Phi(T)\Phi(x)=\Phi(Tx)=\Phi(xT)=\Phi(x)\Phi(T)=x\Phi(T)\end{align*}
Consequently, it follows that $\Phi(T)\in \M'\cap (L^{\infty}(B,\nu)\rtimes\Gamma)$. The proof for a $\Gamma$-invariant $C^*$-subalgebra follows vis a vis to the above argument. The surjectivity of the map $\Phi|_{\mathcal{A}'}$(similarly, for $\Phi|_{\mathcal{M}'}$) follows from the fact that $\Phi|_{L^{\infty}(B,\nu)\rtimes\Gamma}=\text{id}$.
\end{proof}
\end{lemma}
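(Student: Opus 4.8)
The plan is to exploit the theory of multiplicative domains for unital completely positive maps. The crucial observation is that, since $\Phi|_{L^{\infty}(B,\nu)\rtimes\Gamma}=\id$, the subalgebra $L^{\infty}(B,\nu)\rtimes\Gamma$ lies inside the multiplicative domain of $\Phi$ (by \cite[Proposition~1.5.7]{BroOza08}). Concretely, this means that for every $x\in L^{\infty}(B,\nu)\rtimes\Gamma$ and every $T\in\mathbb{B}(\ell^2(\Gamma,L^2(B,\nu)))$ one has the bimodule identities $\Phi(Tx)=\Phi(T)x$ and $\Phi(xT)=x\Phi(T)$. I would establish this first, as everything else is a formal consequence of it.

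For the first assertion, take $T\in\M'$, i.e.\ an operator on $\ell^2(\Gamma,L^2(B,\nu))$ with $Tx=xT$ for all $x\in\M$. Applying $\Phi$ and invoking the bimodule identities coming from the multiplicative domain, I would compute $\Phi(T)x=\Phi(Tx)=\Phi(xT)=x\Phi(T)$ for every $x\in\M$. Since $\Phi(T)\in L^{\infty}(B,\nu)\rtimes\Gamma$, this places $\Phi(T)$ in the relative commutant $\M'\cap\left(L^{\infty}(B,\nu)\rtimes\Gamma\right)$, as required; the $C^*$-case is handled identically, replacing $\M$ by $\mathcal{A}$.

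For surjectivity, I would argue that $\Phi$ already fixes every element of the target relative commutant. Recalling that $L^{\infty}(B,\nu)\rtimes\Gamma$ is concretely represented inside $\mathbb{B}(\ell^2(\Gamma,L^2(B,\nu)))$, any $S\in\M'\cap\left(L^{\infty}(B,\nu)\rtimes\Gamma\right)$ is in particular an operator on $\ell^2(\Gamma,L^2(B,\nu))$ commuting with $\M$, hence $S\in\M'$; and since $S\in L^{\infty}(B,\nu)\rtimes\Gamma$ we have $\Phi(S)=S$ by hypothesis. Thus $S=\Phi(S)\in\Phi(\M')$, which shows that $\Phi|_{\M'}$ maps onto the relative commutant.

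The only genuine input is the multiplicative domain fact, which is where I would be most careful; once the bimodule identities are in hand, both the containment and the surjectivity are purely algebraic manipulations. The surjectivity in particular is essentially automatic, since $\Phi$ restricts to the identity on the crossed product and the relative commutant sits inside the large commutant $\M'$.
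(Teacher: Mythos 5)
Your proof is correct and takes essentially the same route as the paper: both reduce everything to the multiplicative-domain property of $\Phi$ (citing the same fact from Brown--Ozawa) and then run the identical computation $\Phi(T)x=\Phi(Tx)=\Phi(xT)=x\Phi(T)$ for $T$ in the commutant. Your explicit surjectivity argument (every element of the relative commutant is fixed by $\Phi$, hence lies in the image) is a point the paper's proof leaves implicit, and it is stated correctly.
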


\starttocentries
\section{The Singular Hereditary Property}
The key ingredient in the proof of \cite{KalPan} is the deep structural non-commutative-Nevo-Zimmer Theorem (see \cite[Theorem~B]{BH19}). However, such a phenomenon is only observed in the case of higher-rank lattices. To prove co-amenability in our setup, we needed to use instead, the singular hereditary property. 
The following proposition establishes the link between an invariant algebra and its relative commutant in the crossed product if we know that the second object is singular (also see \cite[Lemma~2.2]{HartKal} and \cite[Proposition~3.7]{Houdayersurvey}).
\begin{prop}
\thlabel{singularlift}
Let $(X,\nu)$ be a non-singular $\Gamma$-space.
Let $\mathcal{M}\subset L^{\infty}(X,\nu)\rtimes\Gamma$ be a $\Gamma$-invariant subalgebra.
Suppose that the relative commutant $\tilde{\mathcal{M}}$ of $\mathcal{M}$ in $L^{\infty}(X,\nu)\rtimes\Gamma$ is $\Gamma$-singular (that is,  $\mathbb{E}|_{\tilde{\mathcal{M}}}$ is $\Gamma$-singular). Then, $\mathbb{E}\left(a\lambda(g)\right)=0$ for all $a \in\mathcal{M}$ and for all $g \in \Gamma\setminus\{e\}$. 
\end{prop}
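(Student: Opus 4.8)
The plan is to fix $g\in\Gamma\setminus\{e\}$ and show that the single Fourier coefficient $f:=\mathbb{E}(a\lambda(g))\in L^{\infty}(X,\nu)$ vanishes, by trading a \emph{large} left multiplier (coming from the singularity net) for a \emph{small} right one. First I record that, since $\mathcal{M}$ is $\Gamma$-invariant, so is its relative commutant $\tilde{\mathcal{M}}$ in $L^{\infty}(X,\nu)\rtimes\Gamma$; a direct computation with the covariance relations shows $\lambda(s)\tilde{\mathcal{M}}\lambda(s^{-1})\subseteq\tilde{\mathcal{M}}$. Restricting $\mathbb{E}|_{\tilde{\mathcal{M}}}$ to an ultraweakly dense separable $\Gamma$-invariant $C^*$-subalgebra gives the boundary map $\theta\colon X\to\mathcal{S}(\tilde{\mathcal{M}})$ with $\theta(x)(b)=\mathbb{E}(b)(x)$. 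The hypothesis that $\mathbb{E}|_{\tilde{\mathcal{M}}}$ is $\Gamma$-singular means $g.\theta(x)\perp\theta(x)$ for a.e.\ $x$, and invoking \cite[Proposition~4.10]{BBHP} I obtain a net (a sequence, after using separability) $b_i\in\tilde{\mathcal{M}}$ with $0\le b_i\le 1$ such that, for a.e.\ $x$, $\mathbb{E}(b_i)(x)\to 1$ and $\mathbb{E}(g^{-1}.b_i)(x)\to 0$, the latter being exactly $g.\theta(x)(b_i)=\theta(x)(g^{-1}.b_i)\to 0$.

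The key algebraic step is the covariance identity $b_i\,u=u\,c_i$, where $u:=a\lambda(g)$ and $c_i:=g^{-1}.b_i=\lambda(g^{-1})b_i\lambda(g)\in\tilde{\mathcal{M}}$. Indeed $\lambda(g)c_i=b_i\lambda(g)$, so $u\,c_i=a\lambda(g)c_i=a\,b_i\,\lambda(g)=b_i\,a\,\lambda(g)=b_i\,u$, using that $b_i\in\tilde{\mathcal{M}}$ commutes with $a\in\mathcal{M}$. This lets me convert the large left multiplier $b_i$ (with $\mathbb{E}(b_i)\to 1$) into the small right multiplier $c_i$ (with $\mathbb{E}(c_i)\to 0$). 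I then split, for every $i$,
\[
f=\mathbb{E}(u)=\mathbb{E}(b_i\,u)+\mathbb{E}\big((1-b_i)u\big)=\mathbb{E}(u\,c_i)+\mathbb{E}\big((1-b_i)u\big).
\]

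Both summands tend to $0$ pointwise a.e., via the Hilbert $C^*$-module Cauchy--Schwarz inequality for the conditional expectation, $\mathbb{E}(y^*z)^*\mathbb{E}(y^*z)\le\|\mathbb{E}(y^*y)\|_{\infty}\,\mathbb{E}(z^*z)$, applied so that the \emph{small} operator sits in the module-valued factor $\mathbb{E}(z^*z)$. For the first summand, taking $y=c_i^{1/2}u^*$ and $z=c_i^{1/2}$ yields $\mathbb{E}(u\,c_i)^*\mathbb{E}(u\,c_i)\le\|\mathbb{E}(u\,c_i\,u^*)\|_{\infty}\,\mathbb{E}(c_i)\le\|u\|^2\,\mathbb{E}(c_i)$. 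For the second, taking $y=(1-b_i)^{1/2}u$ and $z=(1-b_i)^{1/2}$ controls the adjoint, $\mathbb{E}(u^*(1-b_i))^*\mathbb{E}(u^*(1-b_i))\le\|u\|^2\,\mathbb{E}(1-b_i)$; since the range $L^{\infty}(X,\nu)$ is abelian, this is the same as $\big|\mathbb{E}((1-b_i)u)\big|^2\le\|u\|^2\,\mathbb{E}(1-b_i)$. Because $\mathbb{E}(c_i)(x)\to 0$ and $\mathbb{E}(1-b_i)(x)=1-\mathbb{E}(b_i)(x)\to 0$ for a.e.\ $x$, evaluating at a.e.\ $x$ and letting $i\to\infty$ forces $f(x)=0$; hence $\mathbb{E}(a\lambda(g))=0$.

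The main obstacle is the passage from the abstract singularity of $\mathbb{E}|_{\tilde{\mathcal{M}}}$ to the usable approximating net together with the covariance identity $b_i u=u c_i$: it is precisely the $\Gamma$-invariance of $\tilde{\mathcal{M}}$ (so that $c_i\in\tilde{\mathcal{M}}$) and the inclusion $\mathcal{M}\subseteq\tilde{\mathcal{M}}'$ that allow the large multiplier to be replaced by a small one, and this is what makes the two Cauchy--Schwarz estimates collapse $f$ to $0$. A secondary technical point is to ensure that the convergence furnished by \cite[Proposition~4.10]{BBHP} is pointwise a.e.\ (after extracting a sequence by separability), so that it combines correctly with the operator-valued Cauchy--Schwarz bounds; choosing the groupings above, with the vanishing factor placed in $\mathbb{E}(z^*z)$, is exactly what keeps the estimates at the level of a.e.\ pointwise inequalities rather than requiring norm convergence of the net.
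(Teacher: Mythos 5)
Your proof is correct and follows essentially the same route as the paper's: both split $a\lambda(g)$ into $b_i\,a\lambda(g)+(1-b_i)a\lambda(g)$ using the singularity net, exploit the commutation of $b_i\in\tilde{\mathcal{M}}$ with $a\in\mathcal{M}$ so that the large left multiplier becomes the $g$-translated (hence small) multiplier sitting next to $\lambda(g)$, and then kill both terms with two Cauchy--Schwarz estimates. The only difference is technical rather than conceptual: the paper disintegrates $\mathbb{E}$ into the scalar states $\theta(x)$ on a separable model and runs the two estimates pointwise with a net that may depend on $x$, whereas you keep the estimates operator-valued (Hilbert-module Cauchy--Schwarz for $\mathbb{E}$) and pass to almost-everywhere evaluation at the end, relying on the uniform a.e.\ version of the approximating net that the paper itself extracts from \cite[Proposition~4.10]{BBHP}.
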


\begin{proof}
We shall complete the proof in three steps.

\underline{\textit{Step-1}}: Choose a separable model $\tilde{\mathcal{A}}\subset L^{\infty}(X,\nu)\rtimes\Gamma$ such that $\Tilde{\mathcal{A}}$ contains $\lambda(\Gamma)$, a separable model $\mathcal{A}$ of $\M$, and a separable model $\mathcal{A}_1$ of $\Tilde{\mathcal{M}}$.   

Let $\tilde{\M}_1$ denote the unit ball of $\tilde{\M}$. It follows from the first part of the proof of \thref{compactmodel} that $\tilde{\M}_1$ is separable in the ultraweak-topology. Let $\tilde{A}_1$ be a countable dense (in the ultraweak-topology) subset of $\tilde{\M}_1$. By adjoining $\tilde{A}_1$ with $\{\lambda(s)a\lambda(s)^*: a\in\Tilde{A}_1,s\in\Gamma\}$, we shall assume that $\tilde{A}_1$ is $\Gamma$-invariant. Likewise, we can find a $\Gamma$-invariant countably dense (in the ultraweak-topology) subset $M_1$ of the unit ball of $\mathcal{M}$. Similarly, we can also find a countably dense (in the ultraweak-topology) subset $A_1$ of the unit ball of $L^{\infty}(X,\nu)\rtimes\Gamma$. Let $\tilde{\mathcal{A}}$ be the $C^*$-algebra generated by $\tilde{A}_1$, $M_1$, $A_1$ and $\lambda(\Gamma)$, i.e., 
\[\tilde{\mathcal{A}}=C^*\left(\tilde{A}_1\cup M_1\cup A_1\cup\lambda(\Gamma)\right).\]
Moreover, let $\mathcal{A}_1$ be the $C^*$-algebra generated by $\tilde{A}_1$ and $\lambda(e)$, and $\mathcal{A}$, the $C^*$-algebra generated by $M_1$ and $\lambda(e)$. It follows from the later part of the proof of \thref{compactmodel} that $\Tilde{\mathcal{A}}$, $\mathcal{A}$ and $\mathcal{A}_1$ are separable models for $L^{\infty}(X,\nu)\rtimes\Gamma$, $\M$ and $\Tilde{\mathcal{M}}$ respectively. Moreover, it is evident from the construction that $\Tilde{\mathcal{A}}$ contains $\mathcal{A}$, $\mathcal{A}_1$ and $\lambda(\Gamma)$.

\underline{\textit{Step-2}}: $\mathbb{E}(a\lambda(g))=0$ for all $a\in\mathcal{A}$ and $g\in\Gamma\setminus\{e\}$.\\
Note that $\Gamma\curvearrowright\tilde{\mathcal{A}}$ by conjugation. 
Restrict $\mathbb{E}$ to $\tilde{A}$ and denote by  $\theta:X\to\mathcal{S}(\tilde{\mathcal{A}})$ the corresponding $\Gamma$-equivariant measurable map. Since $\mathcal{A}_1$ is a separable model for $\tilde{\mathcal{M}}$, using the uniqueness of the map, we see that $\tilde{\theta}:X\to \mathcal{S}(\mathcal{A}_1)$ is given by $\tilde{\theta}(b)(a)=\theta(b)(a)$ for $a\in\mathcal{A}_1$. Let $g\in\Gamma\setminus\{e\}$. Since $\tilde{\mathcal{M}}$ is $\Gamma$-singular, we can find $\tilde{X}\subset X$ a co-null measure subset such that for every $x\in\tilde{X}$,
$\tilde{\theta}(x)\perp g.\tilde{\theta}(x)$. Fix $x\in \tilde{X}$. It follows that there exists a net $\tilde{a_i}\in \mathcal{A}_1$ with $0\le \tilde{a_i}\le 1$ such that $\lim_i\tilde{\theta}(x)(\tilde{a_i})=1$ and $\lim_ig.\tilde{\theta}(x)(\tilde{a_i})=0$. This in particular shows that $\theta(x)\perp g.\theta(x)$.

We first note that $\theta(x)$ is a state on $\Tilde{\mathcal{A}}$. Since $\mathcal{A}$ and $\lambda(\Gamma)$ are both contained in $\Tilde{\mathcal{A}}$, $\theta(x)(a\lambda(g))$ makes sense for all $a\in\mathcal{A}$ and for all $g\in\Gamma\setminus\{e\}$.
Let $\tau=\theta(x)$. Now, since $\tilde{a_i}a=a\tilde{a_i}$, we see that
\begin{align*}
\left|\tau(\tilde{a_i}a\lambda(g))\right|^2&=\left|\tau\left(a\tilde{a_i}^{\frac{1}{2}}\tilde{a_i}^{\frac{1}{2}}\lambda(g)\right)\right|^2\\&\le\tau\left(a\tilde{a_i}a^*\right)\tau\left(\lambda(g^{-1})\tilde{a_i}\lambda(g)\right)\\&=\tau(a\tilde{a_i}a^*)g.\tau(\tilde{a}_i)
\end{align*}
Therefore, we obtain that \[\lim_i\tau(\tilde{a_i}a\lambda(g))=0\]
On the other hand,
\begin{align*}
&\lim_i\tau\left((1-\tilde{a}_i)a\lambda(g)\right)\\&=\lim_i\tau\left((1-\tilde{a}_i)^{\frac{1}{2}}(1-\tilde{a}_i)^{\frac{1}{2}}a\lambda(g)\right)\\& \le\lim_i\left\|\tau\left((1-\tilde{a}_i)\right)\right\|^{\frac{1}{2}}\|\tau\left((\lambda(g^{-1})a^*(1-\tilde{a}_i)a\lambda(g)\right)\|^{\frac{1}{2}}\\&=0.
\end{align*}
Now, combining the above two identities, we see that
\[\tau(a\lambda(g))=\lim_i\tau(\tilde{a}_ia\lambda(g))+\lim_i\tau\left((1-\tilde{a}_i)a\lambda(g)\right)=0.\]
In particular, we obtain that $\theta(x)(a\lambda(g))=0$ for all $x \in \tilde{X}$. This in turn implies that $\mathbb{E}(a\lambda(g))=0$ for all $a\in \mathcal{A}$ and $g\in\Gamma\setminus\{e\}$.

\underline{\textit{Step-3}}: $\mathbb{E}(a\lambda(g))=0$ for all $a\in\mathcal{M}$ and for all $g\ne e$.\\ 
Let $a\in\mathcal{M}$ and $g\in\Gamma\setminus\{e\}$ be given. Since $\mathcal{A}$ is ultraweakly dense in $\mathcal{M}$, we can find a net $a_i\in\mathcal{A}$ such that $a_i\xrightarrow[]{\text{ultraweakly}}a$. Therefore, $a_i\lambda(g)\xrightarrow[]{\text{ultraweakly}}a\lambda(g)$. Since $\mathbb{E}$ is normal, it follows that $\mathbb{E}(a\lambda(g))=\lim_i\mathbb{E}(a_i\lambda(g))=0$. \end{proof}

Let us now discuss our proof strategy for \thref{singularityimpliescoamenability}. Let $\mathcal{A}\subset C_r^*(\Gamma)$ (or $\M\subset L(\Gamma)$) be a $\Gamma$-invariant subalgebra. We shall use \thref{singularlift} combined with the {SH}-property of the action $\Gamma\curvearrowright (B,\nu)$ to conclude that the relative commutant of the subalgebra in the crossed product $L^{\infty}(B,\nu)\rtimes\Gamma$ has a $\Gamma$-invariant state. From this point onwards, our method varies from our predecessor's in \cite{KalPan}. 

Since $(B,\nu)$ is not the Poisson boundary associated with a random walk on $\Gamma$, we can no longer use Izumi's isomorphism theorem \cite[Theorem~4.1]{izumi2004non}. Instead, we shall use the Zimmer amenability of the action $\Gamma\curvearrowright (B,\nu)$ to conclude that the commutant of $\mathcal{A}$ (or $\mathcal{M}$) inside $\mathbb{B}\left(\ell^2(\Gamma,L^2(B,\nu))\right)$ has a $\Gamma$-invariant state.

\begin{proof}[Proof of \thref{singularityimpliescoamenability}]
Let $(B,\nu)$ be a Zimmer amenable SH-space. We first prove the result in the setting of $C_r^*(\Gamma)$.
Assume that $\mathcal{A}\subset C_r^*(\Gamma)$ is a $\Gamma$-invariant non-trivial subalgebra. We shall show that the commutant $\tilde{\mathcal{M}}=\mathcal{A}'$ contained in $\mathbb{B}(\ell^2(\Gamma,L^2(B,\nu)))$) has a $\Gamma$-invariant state. 

Denote by $\mathcal{M}_1$, relative commutant of $\mathcal{A}$ inside $L^{\infty}(B,\nu)\rtimes\Gamma$. Since $(B,\nu)$ is an SH-space, it follows that either $\mathbb{E}|_{\mathcal{M}_1}$ is $\Gamma$-singular or $\mathbb{E}\left(\mathcal{M}_1\right)=\mathbb{C}$. Let us now argue that the former cannot happen, i.e., we shall show that $\tau|_{\mathcal{M}_1}$ is $\Gamma$-invariant, where $\tau=\nu\circ\mathbb{E}$. We would like to point out that whenever we write $\nu\circ\mathbb{E}$, we think of $\nu$ as a state on $L^{\infty}(B,\nu)$ given by the integration with respect to $\nu$. 
For the sake of contradiction, let us assume that the relative commutant $\mathcal{M}_1$ is $\Gamma$-singular. 
We denote by $\tau_0$ the canonical trace on $C_r^*(\Gamma)$. 
Fix $g\ne h\in \Gamma\setminus\{e\}$ and $\tilde{a}
\in\mathcal{A}$. Since $\mathbb{E}|_{C_r^*(\Gamma)}=\tau_0$, using \thref{singularlift}, we see that 
\begin{align*}
\left\langle\tilde{a}\delta_g,\delta_h\right\rangle&=\left\langle\tilde{a}\lambda(g)\delta_e,\lambda(h)\delta_e\right\rangle\\&=\left\langle\lambda(h^{-1})\tilde{a}\lambda(g)\delta_e,\delta_e\right\rangle\\&=\tau_0(\lambda(h^{-1})\tilde{a}\lambda(g))\\&=\tau_0(\tilde{a}\lambda(gh^{-1}))\\&=0
\end{align*}
Let $\mathcal{E}:\mathbb{B}(\ell^2(\Gamma))\to \ell^{\infty}(\Gamma)$ be the projection onto the diagonal part, i.e.,
\[\mathcal{E}(T)(\delta_g)=\langle T(\delta_g),\delta_g\rangle\delta_g,~T\in\mathbb{B}(\ell^2(\Gamma)),g\in\Gamma \]
Considering $\tilde{a}$ as an element in $\mathbb{B}(\ell^2(\Gamma))$, we can write 
\begin{align*}\tilde{a}(\delta_g)&=\sum_{h\in\Gamma}\langle\tilde{a}(\delta_g),\delta_h\rangle\delta_h\\&=\langle\tilde{a}(\delta_g),\delta_g\rangle\delta_g\\&=\mathcal{E}(\tilde{a})(\delta_g)\end{align*}
Therefore, it follows that $\tilde{a}\in \ell^{\infty}(\Gamma)\cap C_r^*(\Gamma)=\mathbb{C}$. Since $\tilde{a}$ is an arbitrary element, it follows that $\mathcal{A}=\mathbb{C}$.
This, in turn, leads to a contradiction since we assumed $\mathcal{A}$ to be non-trivial in the beginning. As a result, it follows that $\tau|_{\mathcal{M}_1}$ is invariant. 
A similar argument applies to a non-trivial $\Gamma$-invariant von Neumann subalgebra $\mathcal{M}\subset L(\Gamma)$. Alternatively, we can also argue the following. Let $\tilde{\M}_1$ denote the relative commutant of $\mathcal{M}$ inside $L^{\infty}(B,\nu)\rtimes\Gamma$. Suppose that $\mathbb{E}|_{\tilde{\M}_1}$ is $\Gamma$-singular. Let $\tilde{a}\in\mathcal{M}$. We can now appeal to \thref{singularlift} to conclude that $\mathbb{E}(a\lambda(g^{-1}))=0$ for all non-identity elements $g \in \Gamma$. Since the family $\left\{\mathbb{E}(\tilde{a}\lambda(g^{-1})):~g\in\Gamma\right\}$ completely determines $\tilde{a}$ (see for example, the discussion following Lemma~7.5 in \cite{takesaki}), it follows that $\tilde{a}\in\mathbb{C}$. Since $\tilde{a}\in\mathcal{M}$ is arbitrary, this implies that $\mathcal{M}=\mathbb{C}$ which contradicts the non-triviality of $\mathcal{M}$. Hence, we obtain that $\tau|_{\tilde{\M}_1}$ is invariant.

Now, since $\Gamma\curvearrowright (B,\nu)$ is Zimmer amenable, we obtain a projection  $\Phi:\mathbb{B}(\ell^2(\Gamma,L^2(B,\nu)))\xrightarrow[]{}L^{\infty}(B,\nu)\rtimes\Gamma$ (cf. \cite[Theorem~2.1]{zimmer1977hyperfinite}). Since $\Phi|_{L(\Gamma)}=\text{id}$, using \thref{relatingthecommutants}, we obtain that $\Phi$ maps $\mathcal{A}'$ (similarly, $\M'$) to the respective relative commutants inside $L^{\infty}(B,\nu)\rtimes\Gamma$. Consequently, the composition of the restriction of $\tau|_{\M_{1}}$(or, $\tau|_{\tilde{\M}_{1}}$) with $\Phi|_{\mathcal{A}'}$(or, $\Phi|_{\mathcal{M}'}$) gives us an invariant state on $\mathcal{A}'$ (or, $\mathcal{M}'$ respectively). 
\end{proof}
\begin{remark} We can also deduce the co-amenability of the $C^*$-algebra case by arguing similarly as in the proof of \cite[Corollary~5.7]{chifan2022invariant}. We include the proof which was kindly provided to us by the anonymous reviewer. For a $\Gamma$-invariant $C^*$-algebra $\mathcal{A}\subset C_r^*(\Gamma)$, consider $\mathcal{M}=\mathcal{A}''\cap L(\Gamma)$, the von Neumann algebra generated by $\mathcal{A}$ inside $L(\Gamma)$. Now, it follows from the von Neumann algebra case above that there is a $\Gamma$-invariant state on $\mathcal{M}'\cap\mathbb{B}(\ell^2(\Gamma))$. Since $\mathcal{M}'\cap\mathbb{B}(\ell^2(\Gamma))=\mathcal{A}'\cap\mathbb{B}(\ell^2(\Gamma)$, the claim follows.
\end{remark}
\section{Correspondence of invariant algebras for SH-actions}
In this section, we give a description of the $\Gamma$-invariant intermediate algebras $\mathcal{M}$ associated with $L^{\infty}(X,\nu)\subset\mathcal{M}\subset L^{\infty}(X,\nu)\rtimes\Gamma$ for essentially free $\Gamma$-space $(X,\nu)$ with the singular hereditary property. We begin with the following definition.

We would like to point out that in \cite{cameron2016intermediate},  a correspondence was obtained for intermediate von Neumann algebras $\mathcal{N}$ of the form $\mathcal{M}\subset\mathcal{N}\subset\mathcal{M}\rtimes\Gamma$ for a $\Gamma$-von Neumann algebras $\mathcal{M}$ on which the action $\Gamma\curvearrowright\mathcal{M}$ is by properly outer $*$-automorphisms (also see \cite[Corollary~4.5]{Cameron2015BimodulesIC} and the remark thereafter). 

We begin with the following observation, which is essentially contained in \cite[Theorem~3.7]{chifan2020rigidity}.

\begin{lemma}
\thlabel{relationtocommutant}
Let $\tilde{\mathcal{M}}\subset\mathcal{M}$ be an inclusion of von Neumann algebras with expectation, and let $u$ be a unitary element in $\mathcal{M}$ such that $\tilde{\mathcal{M}}$ is invariant under the conjugation by $u$. Let $\mathbb{E}_{\tilde{\M}}:\mathcal{M}\to\tilde{\mathcal{M}}$ be a conditional expectation, then $\mathbb{E}_{\M}(u)u^*\in \tilde{\mathcal{M}}'\cap{\mathcal{M}}$.
\end{lemma}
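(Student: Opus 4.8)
The plan is to verify directly that $\mathbb{E}(u)u^*$ commutes with every element of the normalized subalgebra, since that is exactly the assertion that it lies in the relative commutant. The only two ingredients needed are the bimodule property of a conditional expectation and the normalization hypothesis on $u$. (A notational caveat: for the map to be a genuine conditional expectation it must send the larger algebra $\mathcal{M}$ onto the smaller $\tilde{\mathcal{M}}$, and the resulting element then lands in $\tilde{\mathcal{M}}'\cap\mathcal{M}$; I would phrase the statement so that $\mathbb{E}\colon\mathcal{M}\to\tilde{\mathcal{M}}$ and the conclusion reads $\mathbb{E}(u)u^*\in\tilde{\mathcal{M}}'\cap\mathcal{M}$.)

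First I would recall the standard fact that a conditional expectation $\mathbb{E}$ onto $\tilde{\mathcal{M}}$ is a $\tilde{\mathcal{M}}$-bimodule map: for all $a,b\in\tilde{\mathcal{M}}$ and $x\in\mathcal{M}$ one has $\mathbb{E}(axb)=a\,\mathbb{E}(x)\,b$, and in particular $\mathbb{E}(xa)=\mathbb{E}(x)a$ and $\mathbb{E}(ax)=a\,\mathbb{E}(x)$. This follows, e.g., from Tomiyama's characterization of conditional expectations as norm-one projections. Next I would record the consequence of the normalization hypothesis $u\tilde{\mathcal{M}}u^*=\tilde{\mathcal{M}}$: for each $x\in\tilde{\mathcal{M}}$ the element $y:=u^*xu$ again lies in $\tilde{\mathcal{M}}$, and hence $xu=uy$.

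The core of the argument is then a single computation. Fixing $x\in\tilde{\mathcal{M}}$ and writing $y=u^*xu\in\tilde{\mathcal{M}}$, I would use left $\tilde{\mathcal{M}}$-linearity of $\mathbb{E}$, then the relation $xu=uy$, then right $\tilde{\mathcal{M}}$-linearity:
\begin{align*}
x\,\mathbb{E}(u)\,u^*
&=\mathbb{E}(xu)\,u^*
=\mathbb{E}(uy)\,u^*
=\mathbb{E}(u)\,y\,u^*
=\mathbb{E}(u)\,(u^*xu)\,u^*
=\mathbb{E}(u)\,u^*\,x.
\end{align*}
Since $x\in\tilde{\mathcal{M}}$ was arbitrary, $\mathbb{E}(u)u^*$ commutes with all of $\tilde{\mathcal{M}}$, and as it clearly lies in $\mathcal{M}$ we conclude $\mathbb{E}(u)u^*\in\tilde{\mathcal{M}}'\cap\mathcal{M}$.

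I do not expect any serious obstacle: the argument is purely algebraic. The only points requiring care are applying the bimodule identity on the correct side and invoking the fact that $u^*xu$ remains inside $\tilde{\mathcal{M}}$ — this is precisely where the normalization hypothesis is used, and the middle equality would fail without it.
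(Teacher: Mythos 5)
Your proof is correct and takes essentially the same route as the paper: the identical one-line algebraic computation using the $\tilde{\mathcal{M}}$-bimodule property of the conditional expectation together with $u^*xu\in\tilde{\mathcal{M}}$, merely read in the opposite direction (the paper starts from $\mathbb{E}(u)u^*x$ and inserts $uu^*$, you start from $x\,\mathbb{E}(u)u^*$). Your notational caveat is also exactly right: the statement as printed has typos, and both the paper's own proof and its later application (where the expectation maps the crossed product onto the intermediate algebra and the conclusion is used as membership in the relative commutant of the smaller algebra in the larger) confirm that $\mathbb{E}$ should go from $\mathcal{M}$ onto $\tilde{\mathcal{M}}$ with conclusion $\mathbb{E}(u)u^*\in\tilde{\mathcal{M}}'\cap\mathcal{M}$.
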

\begin{proof}
For $x\in \tilde{\mathcal{M}}$, we need to show that $x\mathbb{E}_{\tilde{\M}}(u)u^*=\mathbb{E}_{\tilde{\M}}(u)u^*x$. Indeed, let us observe that
\[\mathbb{E}_{\tilde{\M}}(u)u^*x=\mathbb{E}_{\tilde{\M}}(u)u^*xuu^*=\mathbb{E}_{\tilde{\M}}(uu^*xu)u^*=\mathbb{E}_{\tilde{\M}}(xu)u^*=x\mathbb{E}_{\tilde{\M}}(u)u^*\]
\end{proof}
In general, given an inclusion of unital von Neumann algebras $\tilde{\mathcal{M}}\subset\mathcal{M}$, there may not be a conditional expectation from $\mathcal{M}$ onto $\tilde{\mathcal{M}}$. However, if the inclusion  $\tilde{\mathcal{M}}\subset\mathcal{M}$ is Cartan, then every intermediate von Neumann algebra $\hat{\mathcal{M}}$ of the form $\tilde{\mathcal{M}}\subset\hat{\mathcal{M}}\subset\mathcal{M}$ is in the image of a normal (even faithful) conditional expectation \cite{Yamanouchi2019intermediate}.

In our context, we only need to deal with intermediate von Neumann algebra $\mathcal{N}$ of the form $L^{\infty}(X,\nu)\subset\mathcal{N}\subset L^{\infty}(X,\nu)\rtimes\Gamma$ for non-singular essentially free $\Gamma$-spaces $(X,\nu)$. It is well known that the inclusion $L^{\infty}(X,\nu)\subset L^{\infty}(X,\nu)\rtimes\Gamma$ is Cartan if the action $\Gamma\curvearrowright (X,\nu)$ is essentially free. And hence, every intermediate von Neumann algebra $L^{\infty}(X,\nu)\subset\mathcal{N}\subset L^{\infty}(X,\nu)\rtimes\Gamma$ lies in the image of a faithful normal conditional expectation. 

In fact, for $\Gamma$-von Neumann algebras $\mathcal{M}$, where the action $\Gamma\curvearrowright\mathcal{M}$ is by properly outer $*$-automorphisms, every intermediate von Neumann algebra $\mathcal{M}\subset\mathcal{N}\subset \mathcal{M}\rtimes\Gamma$ lies in the image of a faithful normal conditional expectation~\cite[Theorem~3.2]{cameron2016intermediate}. The notion of properly outer $*$-automorphisms coincides with that of essential freeness for commutative von Neumann algebras.\\
 
\noindent
Now, let $(B,\nu)$ be an SH-space. Let us further assume $L^{\infty}(X,\xi)$ to be a $\Gamma$-invariant subalgebra of $L^{\infty}(B,\nu)$ with the property that the action $\Gamma\curvearrowright (B,\nu)$ restricted to $(X,\xi)$ is essentially free. Let $\mathcal{M}$ be an intermediate von Neumann algebra of the form $L^{\infty}(X,\xi)\subset\mathcal{M}\subset L^{\infty}(X,\xi)\rtimes\Gamma$ lying in the image of a faithful normal conditional expectation $\mathbb{E}_{\mathcal{M}}$. Then, $$\tau(x):=\nu|_{L^{\infty}(X,\xi)}\circ\mathbb{E}\circ\mathbb{E}_{\M}(x),~x\in L^{\infty}(X,\xi)\rtimes\Gamma$$ is a faithful normal state on $L^{\infty}(X,\xi)\rtimes\Gamma$. We can then define the $\|.\|_2$-norm on $L^{\infty}(X,\xi)\rtimes\Gamma$ associated with $\tau$, defined by
\[\|x\|_2=\sqrt{\tau(x^*x)},~x\in L^{\infty}(X,\xi)\rtimes\Gamma\]
The $\|.\|_2$-norm is continuous with respect to the $\sigma$-strong topology, and induces the $\sigma$-strong topology on any bounded (in the operator norm) subset of $L^{\infty}(X,\xi)\rtimes\Gamma$. \stoptocentries
\begin{remark}
\label{norminequality}
In the above setup, the $\|.\|_2$-norm is continuous with respect to $\mathbb{E}_{\M}$, i.e., $\|\mathbb{E}_{\M}(x-y)\|_2\le\|x-y\|_2,~x,y\in L^{\infty}(X,\xi)\rtimes\Gamma$.
Indeed, for $x,y\in L^{\infty}(X,\xi)\rtimes\Gamma$, which follows easily using the Kadison-Cauchy-Schwartz inequality for the ucp map $\mathbb{E}$.\\
\end{remark}

\noindent
We now proceed to give a complete description of intermediate von Neumann algebras $\mathcal{M}$ of the form $L^{\infty}(X,\xi)\subset\mathcal{M}\subset L^{\infty}(X,\xi)\rtimes\Gamma$.

\begin{prop}
\thlabel{comdessdprop}
Let $(B,\nu)$ be an SH-space, $L^{\infty}(X,\xi)\subset L^{\infty}(B,\nu)$ a $\Gamma$-invariant subalgebra with the property that the action $\Gamma\curvearrowright (B,\nu)$ restricted to $(X,\xi)$ is essentially free. Then, every intermediate $\Gamma$-invariant von Neumann algebras $\mathcal{M}$ of the form $L^{\infty}(X,\xi)\subset\mathcal{M}\subset L^{\infty}(X,\xi)\rtimes\Gamma$ is a crossed product of the form $L^{\infty}(X,\xi)\rtimes\Lambda$ for a normal subgroup $\Lambda\triangleleft\Gamma$.
\begin{proof}
Let $\mathcal{M}$ be an intermediate $\Gamma$-invariant von Neumann algebra of the form $L^{\infty}(X,\xi)\subset\mathcal{M}\subset L^{\infty}(X,\xi)\rtimes\Gamma$. Since the action $\Gamma\curvearrowright \left(X,\xi\right)$ is non-singular and essentially free, we can use \cite[Section~4]{Yamanouchi2019intermediate} or \cite[Theorem~3.2]{cameron2016intermediate} to conclude the existence of a faithful normal conditional expectation $\mathbb{E}_{\mathcal{M}}:L^{\infty}(X,\xi)\rtimes\Gamma\to\M$. Let $\tilde{\M}=\mathcal{M}'\cap\left(L^{\infty}(X,\xi)\rtimes\Gamma\right)$ be the relative commutant of $\M$ inside $L^{\infty}(X,\xi)\rtimes\Gamma$. Observe that $\tilde{\M}\subset L^{\infty}(X,\xi)'\cap\left(L^{\infty}(X,\xi)\rtimes\Gamma\right)$. Since the action $\Gamma\curvearrowright (X,\xi)$ is essentially free, the latter intersection coincides with $L^{\infty}(X,\xi)$. Hence, $\tilde{\M}\subset L^{\infty}(X,\xi)$ and therefore, $\mathbb{E}(\tilde{\M})=\tilde{\M}$. Now, since $(B,\nu)$ is an SH-space, $\mathbb{E}|_{\tilde{\M}}$ (in this case we view $\tilde{\M}$ as a subalgebra of $L^{\infty}(B,\nu)\rtimes\Gamma$) is either $\Gamma$-singular or $\mathbb{E}({\tilde{\M}})=\mathbb{C}$. We consider each of these cases one by one.  In the case when $\mathbb{E}(\tilde{\M})=\mathbb{C}$, since $\mathbb{E}(\tilde{\M})=\tilde{\M}$, we obtain that $\tilde{\M}=\mathbb{C}$. Let $\Lambda=\{g\in\Gamma: \lambda(g)\in \mathcal{M}\}$. Since $\mathcal{M}$ is $\Gamma$-invariant, it is easy to see that $\Lambda\triangleleft\Gamma$. Moreover, it is clear from the construction that $L^{\infty}(X,\xi)\rtimes\Lambda\subseteq\mathcal{M}$. All that remains to show is that $\mathcal{M}\subset L^{\infty}(X,\xi)\rtimes\Lambda$. Since $\mathcal{M}$ is $\Gamma$-invariant, it follows from \thref{relationtocommutant} that \[\mathbb{E}_{\M}(\lambda(g))\lambda(g)^*\in\mathcal{M}'\cap\left(L^{\infty}(X,\xi)\rtimes\Gamma\right)=\mathbb{C},~\forall g\in \Gamma.\]
Therefore, we obtain that $\mathbb{E}_{\M}(\lambda(g))=a_g\lambda(g)$ for some $a_g\in\mathbb{C}$. Moreover, if $a_g\ne 0$, we see that $\mathbb{E}_{\M}(\lambda(g))\in L(\Lambda)$ just by construction. 
Let $\tau=\nu|_{L^{\infty}(X,\xi)}\circ\mathbb{E}\circ\mathbb{E}_{\mathcal{M}}$, and consider the $\|.\|_2$-norm on $L^{\infty}(X,\xi)\rtimes\Gamma$ associated with $\tau$. Now, for $x\in \mathcal{M}$ and an arbitrary $\epsilon>0$, we can find $f_1,f_2,\ldots, f_n\in L^{\infty}(X,\xi)$ and $s_1,s_2,\ldots, s_n\in \Gamma$ such that 
\[\left\|x-\sum_{i=1}^nf_i\lambda(s_i)\right\|_2<\epsilon.\]
Since $\mathbb{E}_{\M}|_{L^{\infty}(X,\xi)}=\text{id}$, it follows from remark~\ref{norminequality} that
\[\left\|\mathbb{E}_M(x)-\sum_{i=1}^nf_i\mathbb{E}_M(\lambda(s_i))\right\|_2<\epsilon.\] 
Moreover, since $a\in \mathcal{M}$ and $\mathbb{E}_{\M}|_{\mathcal{M}}=\text{id}$, we see that \[\left\|x-\sum_{i=1}^nf_i\mathbb{E}_{\mathcal{M}}(\lambda(s_i))\right\|_2<\epsilon.\] 
Let us now observe that $\mathbb{E}_{\mathcal{M}}(\lambda(s_i))\in L(\Lambda)$ for each $i=1,2,\ldots,n$. As a consequence, we obtain that $\sum_{i=1}^nf_i\mathbb{E}_{\mathcal{M}}(\lambda(s_i))\in L^{\infty}(X,\xi)\rtimes\Lambda$. Since $\epsilon>0$ is arbitrary, it is evident that $x\in L^{\infty}(X,\xi)\rtimes\Lambda$. This finishes the proof for the case when $\nu\circ\mathbb{E}|_{\tilde{\M}}$ is invariant. 
If $\mathbb{E}|_{\tilde{\mathcal{M}}}$ is $\Gamma$-singular, it follows from \thref{singularlift} that $\mathcal{M}=\mathbb{E}(\mathcal{M})$. Since $L^{\infty}(X,\xi)\subset\mathcal{M}\subset L^{\infty}(X,\xi)\rtimes\Gamma$, it follows that $\mathbb{E}(\mathcal{M})=L^{\infty}(X,\xi)=\mathcal{M}$.
\end{proof}
\end{prop}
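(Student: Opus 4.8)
The plan is to exploit the two structural inputs that are available: the Cartan property of the inclusion $L^{\infty}(X,\xi)\subset L^{\infty}(X,\xi)\rtimes\Gamma$ coming from essential freeness, and the \emph{SH}-dichotomy applied to the relative commutant of $\mathcal{M}$. First I would produce a faithful normal conditional expectation $\mathbb{E}_{\M}\colon L^{\infty}(X,\xi)\rtimes\Gamma\to\mathcal{M}$: since the restricted action $\Gamma\curvearrowright(X,\xi)$ is non-singular and essentially free, the inclusion $L^{\infty}(X,\xi)\subset L^{\infty}(X,\xi)\rtimes\Gamma$ is Cartan, so every intermediate algebra lies in the image of such an expectation by \cite{Yamanouchi2019intermediate} (or \cite[Theorem~3.2]{cameron2016intermediate}). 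This expectation, together with the $\|\cdot\|_2$-contraction recorded in Observation~\ref{norminequality}, will be the main computational device.

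Next I would pin down the relative commutant $\tilde{\M}=\mathcal{M}'\cap(L^{\infty}(X,\xi)\rtimes\Gamma)$. Because $L^{\infty}(X,\xi)\subset\mathcal{M}$, one has $\tilde{\M}\subset L^{\infty}(X,\xi)'\cap(L^{\infty}(X,\xi)\rtimes\Gamma)$, and essential freeness forces this last intersection to be $L^{\infty}(X,\xi)$ itself; hence $\tilde{\M}\subset L^{\infty}(X,\xi)$, and in particular $\mathbb{E}(\tilde{\M})=\tilde{\M}$. Since $\mathcal{M}$ is $\Gamma$-invariant, so is $\tilde{\M}$, so I may apply the \emph{SH}-property (viewing $\tilde{\M}$ as a subalgebra of $L^{\infty}(B,\nu)\rtimes\Gamma$): either $\mathbb{E}(\tilde{\M})=\mathbb{C}$, or $\mathbb{E}|_{\tilde{\M}}$ is $\Gamma$-singular. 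This dichotomy organizes the rest of the argument.

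In the invariant branch, combining $\mathbb{E}(\tilde{\M})=\mathbb{C}$ with $\mathbb{E}(\tilde{\M})=\tilde{\M}$ gives $\tilde{\M}=\mathbb{C}$. I then set $\Lambda=\{g\in\Gamma:\lambda(g)\in\mathcal{M}\}$, which is normal precisely because $\mathcal{M}$ is conjugation-invariant, and the inclusion $L^{\infty}(X,\xi)\rtimes\Lambda\subseteq\mathcal{M}$ is automatic. For the reverse inclusion the crux is \thref{relationtocommutant}, which yields $\mathbb{E}_{\M}(\lambda(g))\lambda(g)^*\in\mathcal{M}'\cap(L^{\infty}(X,\xi)\rtimes\Gamma)=\tilde{\M}=\mathbb{C}$; thus each $\mathbb{E}_{\M}(\lambda(g))$ is a scalar multiple of $\lambda(g)$, and when the scalar is nonzero this forces $g\in\Lambda$, so $\mathbb{E}_{\M}(\lambda(g))\in L(\Lambda)$ in every case. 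Finally, approximating an arbitrary $x\in\mathcal{M}$ in $\|\cdot\|_2$ by finite sums $\sum_i f_i\lambda(s_i)$ with $f_i\in L^{\infty}(X,\xi)$, applying $\mathbb{E}_{\M}$ (which fixes $x$ and each $f_i$), and invoking the $\|\cdot\|_2$-contraction, I conclude that $x$ is a $\|\cdot\|_2$-limit of elements of $L^{\infty}(X,\xi)\rtimes\Lambda$ and hence belongs to it, giving $\mathcal{M}=L^{\infty}(X,\xi)\rtimes\Lambda$.

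In the singular branch, \thref{singularlift} applies to $\mathcal{M}$, whose relative commutant is $\Gamma$-singular, and gives $\mathbb{E}(a\lambda(g))=0$ for all $a\in\mathcal{M}$ and $g\neq e$; since the Fourier coefficients $\mathbb{E}(x\lambda(g)^*)$ determine $x$, this collapses $\mathcal{M}$ to $\mathbb{E}(\mathcal{M})\subseteq L^{\infty}(X,\xi)$, i.e.\ $\mathcal{M}=L^{\infty}(X,\xi)=L^{\infty}(X,\xi)\rtimes\{e\}$, the trivial case $\Lambda=\{e\}$. The hard part will be the reverse inclusion in the invariant branch: everything hinges on the scalar-valuedness of $\mathbb{E}_{\M}(\lambda(g))\lambda(g)^*$, which rests on the triviality of $\tilde{\M}$ extracted from the \emph{SH}-property together with essential freeness, and on pushing the $\|\cdot\|_2$-approximation through $\mathbb{E}_{\M}$ via the contraction estimate.
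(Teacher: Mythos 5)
Your proposal is correct and takes essentially the same approach as the paper's proof: you construct the faithful normal conditional expectation $\mathbb{E}_{\mathcal{M}}$ from essential freeness, locate the relative commutant $\tilde{\mathcal{M}}$ inside $L^{\infty}(X,\xi)$, run the \emph{SH}-dichotomy on it, and in the invariant branch use \thref{relationtocommutant} plus the $\|\cdot\|_2$-contraction of Observation~\ref{norminequality} to pin $\mathcal{M}$ between $L^{\infty}(X,\xi)\rtimes\Lambda$ and itself, while in the singular branch you invoke \thref{singularlift} exactly as the paper does. If anything, you are slightly more explicit than the paper in noting that $\tilde{\mathcal{M}}$ inherits $\Gamma$-invariance and that the singular case yields $\mathcal{M}=L^{\infty}(X,\xi)$, i.e.\ $\Lambda=\{e\}$.
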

\starttocentries
\section{Towards the Conjecture}
Let $\Gamma$ be an irreducible lattice in a higher rank connected semisimple Lie group $G$ with a trivial center and no non-trivial compact factor, all of whose simple factors have real rank of at least two. It is known that $\Gamma$ admits a Furstenberg measure $\mu$, i.e., a random walk on $\Gamma$ such that the Furstenberg-Poisson boundary associated with a random walk $\mu$ is realized on $G/P$. We denote by $\nu_P$ the corresponding Poisson measure. 

Let us now put \thref{comdessdprop} along with \cite[Corollary~F]{Houdayersurvey} in perspective. 
The first result gives us a description of the intermediate invariant subalgebras $\mathcal{M}$ of the form $L^{\infty}(G/Q,\nu_Q)\subset\mathcal{M}\subset L^{\infty}(G/Q,\nu_Q)\rtimes\Gamma$, where $P\le Q\lneq G$ is a closed subgroup. On the other hand, the second result gives a description of the intermediate algebras $\mathcal{M}$ with $L(\Gamma)\subset\mathcal{M}\subset L^{\infty}(G/P,\nu_P)\rtimes\Gamma$.  Observe that such a $\mathcal{M}$ is automatically $\Gamma$-invariant. At the same time, let us also observe that the invariant algebras $\mathcal{M}$ considered above either share the same group algebra part or the commutative algebra part with those of their upper and lower bounds.

Consequently, considering all of the above, we make the following conjecture.
\begin{conjecture*}
\thlabel{conj}
Let $\mathcal{M}$ be a $\Gamma$-invariant subalgebra of $L^{\infty}(G/P,\nu_P)\rtimes\Gamma$. Then, $\mathcal{M}$ is a crossed product of the form $L^{\infty}(G/Q,\nu_Q)\rtimes\Lambda$, where $\Lambda\triangleleft\Gamma$.
\end{conjecture*}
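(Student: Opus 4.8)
The plan is to reduce the conjecture to \thref{comdessdprop}: I would first pin down the commutative part of $\mathcal{M}$ as a boundary factor $L^{\infty}(G/Q,\nu_Q)$, then squeeze $\mathcal{M}$ between $L^{\infty}(G/Q,\nu_Q)$ and $L^{\infty}(G/Q,\nu_Q)\rtimes\Gamma$, and finally quote \thref{comdessdprop}.

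Before starting, I would record the structural input particular to this setting. For $\Gamma$ an irreducible higher rank lattice as in the statement, the Furstenberg--Poisson boundary $(G/P,\nu_P)$ is an SH-space: this is exactly the situation of the example drawn from \cite{bader2021charmenability}, where the dichotomy between invariance and $\Gamma$-singularity of $\mathbb{E}|_{\mathcal{M}}$ is supplied by \cite[Theorem~5.4]{bader2021charmenability}. Moreover, for every parabolic $P\subseteq Q\subseteq G$ the factor action $\Gamma\curvearrowright (G/Q,\nu_Q)$ is ergodic, non-singular and essentially free, so that the inclusion $L^{\infty}(G/Q,\nu_Q)\subset L^{\infty}(G/Q,\nu_Q)\rtimes\Gamma$ is Cartan and admits a faithful normal conditional expectation; establishing essential freeness of the factor actions, and not merely of the full boundary action, is a point I would need to check carefully.

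With these in place, set $\mathcal{A}:=\mathcal{M}\cap L^{\infty}(G/P,\nu_P)$, a $\Gamma$-invariant von Neumann subalgebra of $L^{\infty}(G/P,\nu_P)$. Here I would invoke the intermediate factor theorem for higher rank lattices (the Nevo--Zimmer phenomenon, in the form available for irreducible $\Gamma$ of higher rank): every $\Gamma$-invariant subalgebra of $L^{\infty}(G/P,\nu_P)$ is of the form $L^{\infty}(G/Q,\nu_Q)$ for a parabolic $Q\supseteq P$. This produces the subgroup $Q$ and gives the lower bound $L^{\infty}(G/Q,\nu_Q)=\mathcal{A}\subseteq\mathcal{M}$ for free. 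The remaining, and decisive, step is the upper bound $\mathcal{M}\subseteq L^{\infty}(G/Q,\nu_Q)\rtimes\Gamma$, which is precisely the content of the sufficient condition \thref{upperboundforM}: for $x\in\mathcal{M}$ one must show that each Fourier coefficient $\mathbb{E}(x\lambda(g)^{*})\in L^{\infty}(G/P,\nu_P)$ actually lies in $\mathcal{A}=L^{\infty}(G/Q,\nu_Q)$. I would attempt this by combining the $\Gamma$-invariance of $\mathcal{M}$ with \thref{relationtocommutant} and the singularity dichotomy of the SH-space to force these coefficients into the $Q$-invariant directions. Once both inclusions $L^{\infty}(G/Q,\nu_Q)\subseteq\mathcal{M}\subseteq L^{\infty}(G/Q,\nu_Q)\rtimes\Gamma$ hold, \thref{comdessdprop} applied to the factor $(X,\xi)=(G/Q,\nu_Q)$ of the SH-space $(G/P,\nu_P)$ yields $\mathcal{M}=L^{\infty}(G/Q,\nu_Q)\rtimes\Lambda$ for a normal subgroup $\Lambda\triangleleft\Gamma$.

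The genuine obstacle is the upper-bound step. The difficulty is twofold. First, before the upper bound is known there is no a priori conditional expectation of $L^{\infty}(G/P,\nu_P)\rtimes\Gamma$ onto $\mathcal{M}$ --- the inclusion $L^{\infty}(G/Q,\nu_Q)\subseteq\mathcal{M}$ need not be Cartan without it --- so the clean expectation-based argument used in the proof of \thref{comdessdprop} is not directly available. Second, the intermediate factor theorem invoked above is itself a deep higher-rank-specific ingredient rather than a consequence of the SH-formalism, and bridging it with the singularity machinery to constrain the off-diagonal part of $\mathcal{M}$ is exactly where the present techniques fall short; this is why the statement is recorded as a conjecture and only a sufficient condition (\thref{upperboundforM}) is established.
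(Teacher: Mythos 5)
The first thing to note is that the statement you are addressing is recorded in the paper as a \emph{conjecture}: the paper contains no proof of it, only a reduction strategy (the discussion surrounding \thref{condinv}) and a sufficient condition (\thref{upperboundforM}) whose hypothesis is not verified. So your proposal cannot be compared against a complete argument, only against the paper's partial progress, and by that measure your outline is essentially faithful. Like the paper, you reduce the conjecture to establishing the sandwich $L^{\infty}(G/Q,\nu_Q)\subseteq\mathcal{M}\subseteq L^{\infty}(G/Q,\nu_Q)\rtimes\Gamma$ and then quoting \thref{comdessdprop}, and you correctly isolate the upper bound as the step nobody can do: the paper replaces it by the unverified hypothesis of \thref{upperboundforM} (existence of a $\Gamma$-equivariant normal ucp map $\langle\mathcal{M},L(\Gamma)\rangle\to\mathbb{E}(\mathcal{M})\rtimes\Gamma$), and you likewise concede that forcing the Fourier coefficients $\mathbb{E}(x\lambda(g)^{*})$ into $L^{\infty}(G/Q,\nu_Q)$ is out of reach. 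Your self-assessment is accurate: the gap in your proposal is exactly the gap that makes the statement a conjecture, so there is nothing to \say{fix} here short of solving the open problem.

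Two differences between your reduction and the paper's are worth recording. First, you pin down the commutative part as $\mathcal{A}=\mathcal{M}\cap L^{\infty}(G/P,\nu_P)$ and identify it by the commutative factor theorem, whereas the paper works with $\mathbb{E}(\mathcal{M})$ and proves \thref{condinv} (namely $\mathbb{E}(\mathcal{M})\subset\mathcal{M}$) using the noncommutative Nevo--Zimmer theorem \cite[Theorem~B]{BH19} together with Suzuki's averaging argument \cite{Suz}; this yields $L^{\infty}(G/Q,\nu_Q)\subseteq\mathcal{M}$ \emph{and} the identification $\mathbb{E}(\mathcal{M})=\mathcal{M}\cap L^{\infty}(G/P,\nu_P)$ in one stroke. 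Your intersection route gives the lower bound tautologically but provides no control relating $\mathcal{A}$ to $\mathbb{E}(\mathcal{M})$: a priori $\mathcal{A}$ could be strictly smaller than $\mathbb{E}(\mathcal{M})$, making your upper bound formally harder than the paper's. Second, the paper's sufficient condition is not run through \thref{relationtocommutant} or the singularity dichotomy, as you propose attempting; the proof of \thref{upperboundforM} identifies $\langle\mathcal{M},L(\Gamma)\rangle$ as $L^{\infty}(G/Q',\nu_{Q'})\rtimes\Gamma$ via \cite[Corollary~F]{Houdayersurvey} and then uses the tightness machinery of \cite{hartman2021tight} to collapse this onto $\mathbb{E}(\mathcal{M})\rtimes\Gamma$. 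Finally, your outline silently assumes $Q\neq G$: when $\mathcal{A}=\mathbb{C}$ the factor $(X,\xi)$ is trivial, \thref{comdessdprop} does not apply, and one would instead need $\mathcal{M}\subset L(\Gamma)$ so that \cite[Theorem~1.1]{KalPan} gives $\mathcal{M}=L(\Lambda)$ --- a case the paper also flags as unresolved.
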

We can only address the above conjecture under a certain technical assumption. We briefly recall the notion of Poisson transform and some related properties for our later use. 
\begin{definition}
\thlabel{poissontransform}
Let $\mathcal{A}$ be a unital $\Gamma$-$C^*$-algebra and $\varphi$, a state on $\mathcal{A}$. The Poisson transform associated with $\varphi$ is the map $\mathcal{P}_{\varphi}:\mathcal{A}\to\ell^{\infty}(\Gamma)$ defined by $$\mathcal{P}_{\varphi}(a)(s)=\varphi(s^{-1}a),~a\in\mathcal{A},~s\in\Gamma.$$
\end{definition}
\noindent
\begin{remark}
\label{invariantelements}
 Let $\mathcal{A}^{\Gamma}$ denote the invariant elements in $\mathcal{A}$, i.e., $\mathcal{A}^{\Gamma}=\{a\in\mathcal{A}: s.a=a~\forall s\in\Gamma\}$. It is clear that $\mathcal{A}^{\Gamma}\subset\{a\in\mathcal{A}:\varphi(sa)=\varphi(a)~\forall s\in\Gamma\}$. We observe below that the other inclusion holds if $\mathcal{P}_{\varphi}$ is an isometry. Indeed, assume that this is the case. Let $a\in\mathcal{A}$ be such that $\varphi(sa)=\varphi(a)$ for all $s\in\Gamma$. Fix $g\in\Gamma$. Then, $\mathcal{P}_{\varphi}(a-ga)(s)=\varphi(s^{-1}a-s^{-1}ga)=0$ for all $s\in \Gamma$. Therefore, $\mathcal{P}_{\varphi}(a-ga)=0$. Since $\mathcal{P}_{\varphi}$ is an isometry, we see that $a-ga=0$ for all $g\in\Gamma$. Consequently, $a\in\mathcal{A}^{\Gamma}$.   
\end{remark}
\begin{remark} In the case that $\mathcal{A}$ is commutative, namely $\mathcal{A}=C(X)$, the Poisson transform $\mathcal{P}_{\varphi}$ is an isometry if and only if the measure $\varphi$ is contractible in the sense of Azencott (\cite[Chapter-V, Proposition~2.1]{glasner1976proximal}).

If $\mathcal{A}=\mathcal{M}$ is a commutative von Neumann algebra, namely $\mathcal{M}=L^{\infty}(X,\varphi)$, then the Poisson transform is an isometry if and only if the measure $\varphi$ is SAT in the sense of Jaworski~\cite{jaworski1994strongly}.

For the relation between topological models of SAT-measures and contractible measures, see~\cite[Theorem~8.9]{furstenberg2010stationary}.
\end{remark}
We now briefly recall the notion of stationary states in the context of unital $C^*$-algebras and refer the readers to \cite{HartKal} for more details. 
\begin{definition}
Let $\mathcal{A}$ be a unital $\Gamma$-$C^*$-algebra. Let $\mu\in\text{Prob}(\Gamma)$. A state $\varphi\in S(\mathcal{A})$ is called $\mu$-stationary if 
\[\mu*\varphi(a)=\sum_{s\in\Gamma}\mu(s)\varphi(s^{-1}a)=\varphi(a),~\forall a\in\mathcal{A}.\]
\end{definition}
For the canonical conditional expectation $\mathbb{E}:\mathcal{A}\rtimes_r\Gamma\to\mathcal{A}$, $\tau=\varphi\circ\mathbb{E}$ is a $\mu$-stationary state on $\mathcal{A}\rtimes_r\Gamma$ for any $\mu$-stationary state $\varphi$ on $\mathcal{A}$. Indeed, for any $a\in \mathcal{A}\rtimes_r\Gamma$, it follows from the $\Gamma$-equivariance of $\mathbb{E}$ that
\[\mu*\tau(a)=\sum_{s\in\Gamma}\mu(s)\tau(s^{-1}a)=\sum_{s\in\Gamma}\mu(s)\varphi(s^{-1}\mathbb{E}(a))=\varphi(\mathbb{E}(a))=\tau(a)\]
We now proceed to prove the following lemma. Unless otherwise stated, $\tau$ denotes $\nu_P\circ\mathbb{E}$. Here, $\mathbb{E}:L^{\infty}(G/P,\nu_P)\rtimes\Gamma\to L^{\infty}(G/P,\nu_P)$ is the canonical conditional expectation. We think of $\nu_P$ as a state on $L^{\infty}(G/P,\nu_P)$ given by the integration with respect to $\nu_P$. Recall that $(G/P,\nu_P)$ is the Furtsenberg-Poisson boundary associated with a random walk $\mu$ on $\Gamma$. In particular, $\nu_P$ is a $\mu$-stationary state on $L^{\infty}(G/P,\nu_P)$ and hence, it follows from the above observation that $\tau$ is a $\mu$-stationary state on $L^{\infty}(G/P,\nu_P)\rtimes\Gamma$.
\begin{lemma}
\thlabel{condinv}
Let $\mathcal{M}$ be a $\Gamma$-invariant subalgebra of the crossed product $L^{\infty}(G/P,\nu_P)\rtimes\Gamma$. Then, $\mathbb{E}\left(\mathcal{M}\right)\subset \mathcal{M}$.
\begin{proof}
Let us first consider the case when $\tau$ is $\Gamma$-invariant. For this case, since $\tau|_{\mathcal{M}}$ is $\Gamma$-invariant, we see that the restriction of $\nu_P$ on $\mathbb{E}\left(\mathcal{M}\right)$ is invariant. Since $(G/P,\nu_P)$ is the $(\Gamma,\mu)$-Furstenberg-Poisson boundary, the Poisson transform $\mathcal{P}_{\nu_{P}}:L^{\infty}(G/P,\nu_P)\to\ell^{\infty}(\Gamma)$ is an isometry (see~\cite[Proposition~2.2]{jaworski1994strongly}). Since $\nu_P$ is $\Gamma$-ergodic, using Remark~\ref{invariantelements}, we see that the only functions $f\in L^{\infty}(G/P,\nu_P)$ on which $\nu_P$ is invariant are the constant functions. Hence, $\mathbb{E}\left(\mathcal{M}\right)$ consists of constant functions only and therefore, $\mathbb{C}=\mathbb{E}(\mathcal{M})\subset\mathcal{M}$.\\ 
Now, assume that $\tau$ is not $\Gamma$-invariant. Let us observe that the action $\Gamma\curvearrowright\M$ is ergodic (cf.~\cite[Lemma~2.16]{KalPan}). Therefore, using \cite[Theorem~B]{BH19}, we see that there exists a closed subgroup $ P\le Q\lneq G$ and a $\Gamma$-equivariant von Neumann algebra embedding $\theta:L^{\infty}(G/Q) \to \mathcal{M}$, such that $\tau \circ \theta=\nu_{Q}$. Note that in this case, $\nu_{Q}$ is the push forward measure of $\nu_{P}$ under the canonical quotient map
$G/P \to G/Q$.  
Therefore, the composition $\mathbb{E} \circ \theta$ is a normal $\Gamma$-equivariant von Neumann
algebra homomorphism from $L^{\infty}(G/Q, \nu_{Q})$ into $L^{\infty}(G/P, \nu_{P})$. However, the
canonical embedding is the unique such map (\say{the uniqueness of the boundary map}, e.g., \cite[Theorem~2.14]{BadShal06}); hence, $\mathbb{E} \circ \theta = \text{id}|_{L^{\infty}(G/Q)}$. Since, $\mathbb{E}$ is a faithful conditional
expectation, it follows that $\theta = \text{id} |_{L^{\infty}(G/Q)}$ (see e.g., \cite[Lemma 3.3]{hamana1985injective}), and
as a consequence, we see that $ L^{\infty}(G/Q,\nu_{Q})\subset \mathcal{M} \subset L^{\infty}(G/P,\nu_P)\rtimes\Gamma$. Moreover, the action $\Gamma \curvearrowright (G/Q,\nu_{Q})$ is essentially free (see eg., \cite[Lemma~6.2]{BH19}). From the proof of \cite[Theorem~3.6]{Suz}, for each $a\in L^{\infty}(G/P,\nu_P)\rtimes_{\text{alg}}\Gamma$, we can find $p_1,p_2,\ldots,p_n\in L^{\infty}(G/Q,\nu_Q)$ such that $\sum_{i=1}^np_iap_i=\mathbb{E}(a)$. Hence, if $a\in \mathcal{M}\cap L^{\infty}(G/P,\nu_P)\rtimes_{\text{alg}}\Gamma$, then $\mathbb{E}(a)\in\mathcal{M}$. Now, a standard approximation argument yields $\mathbb{E}(\mathcal{M})\subset\mathcal{M}$.
\end{proof}
\end{lemma}
Our strategy is to consider the cases when $\mathbb{E}(\M)$ is trivial or not. In the situation where $\mathbb{E}(\mathcal{M})=\mathbb{C}$, we want to show that $\mathcal{M}\subset L(\Gamma)$ and use \cite[Theorem~1.1]{KalPan} to conclude that $\M=L(\Lambda)$ for some normal subgroup $\Lambda\triangleleft\Gamma$. 
In the other case, when $\mathbb{E}(\mathcal{M})$ is non-trivial, we claim that it is enough to show that $\mathcal{M}\subset \mathbb{E}(\M)\rtimes\Gamma$. 

Indeed, let us look at $\mathcal{M}\cap L(\Gamma)$ which is a $\Gamma$-invariant subalgebra of $L(\Gamma)$ and hence by \cite[Theorem~1.1]{KalPan}, is of the form $L(\Lambda)$ for some normal subgroup $\Lambda\triangleleft\Gamma$. From \thref{condinv}, we already know that $\mathbb{E}(\mathcal{M})\subset\mathcal{M}$ and hence, $\mathbb{E}(\mathcal{M})\rtimes\Lambda\subset \mathcal{M}$. Whenever $\mathcal{M}\subseteq\mathbb{E}(\M)\rtimes\Gamma$, we obtain that
\[\mathbb{E}(\mathcal{M})\rtimes\Lambda\subset\mathcal{M}\subset\mathbb{E}(\M)\rtimes\Gamma,\]and then, \thref{conj} would be a consequence of \thref{comdessdprop}. We give an abstract condition that makes use of the tightness property of the $\mu$-boundaries \cite{hartman2021tight} which forces $\mathcal{M}\subset\mathbb{E}(\mathcal{M})\rtimes\Gamma$. For a $\Gamma$-invariant subalgebra $\mathcal{M}$, we denote by $\langle \mathcal{M}, L(\Gamma)\rangle$, the von Neumann algebra generated by $\mathcal{M}$ and $L(\Gamma)$.
\begin{prop}
\thlabel{upperboundforM}
Let $\mathcal{M}$ be a $\Gamma$-invariant subalgebra of $L^{\infty}(G/P,\nu_P)\rtimes\Gamma$. Suppose there exists a $\Gamma$-equivariant normal ucp map $\Phi: \left\langle\mathcal{M},L(\Gamma)\right\rangle\to\mathbb{E}(\mathcal{M})\rtimes\Gamma$. Then, $\mathcal{M}\subset \mathbb{E}(\mathcal{M})\rtimes\Gamma$.
\begin{proof}
It follows from \cite[Corollary~F]{Houdayersurvey} that $\left\langle\mathcal{M},L(\Gamma)\right\rangle$ is of the form $L^{\infty}(G/Q,\nu_Q)\rtimes\Gamma$, where $P\le Q\lneq G$ is a closed subgroup. We claim that 
\begin{align}
\label{topbottomequal}
L^{\infty}(G/Q,\nu_Q)\rtimes\Gamma=\mathbb{E}(\M)\rtimes\Gamma\end{align} which in turn will imply that $\mathcal{M}\subset \mathbb{E}(\M)\rtimes\Gamma$.

In order to show this, we first argue that $C(G/Q)\xhookrightarrow{}L^{\infty}(G/Q,\nu_Q)$ is $\Gamma$-tight. This follows from the fact that the compact space $G/Q$ has a unique $\mu$-stationary measure $\nu_Q$. Therefore, using \cite[Corollary~2.13]{hartman2021tight} we see that $C(G/Q)\xhookrightarrow{}L^{\infty}(G/Q,\nu_Q)\rtimes\Gamma$ is $\Gamma$-tight. Since $L^{\infty}(G/Q,\nu_Q)\rtimes\Gamma$ is generated as a von Neumann algebra by $C(G/Q)$ and $L(\Gamma)$, we see that the inclusion $L(\Gamma)\xhookrightarrow{} L^{\infty}(G/Q,\nu_Q)\rtimes\Gamma$ is co-tight in the sense of \cite[Definition~4.1]{hartman2021tight}. In particular, the inclusion $\mathbb{E}(\M)\rtimes\Gamma\xhookrightarrow{}L^{\infty}(G/Q,\nu_Q)\rtimes\Gamma$ is co-tight. By our assumption, $\Phi:L^{\infty}(G/Q,\nu_Q)\rtimes\Gamma\to\mathbb{E}(\M)\rtimes\Gamma$ is a $\Gamma$-equivariant conditional expectation.  Equation~(\ref{topbottomequal}) is now a consequence of \cite[Lemma~4.5]{hartman2021tight}, where $\text{C}=\mathbb{E}(\mathcal{M})\rtimes\Gamma$ and $\text{B}=L^{\infty}(G/Q,\nu_Q)\rtimes\Gamma$.   
 \end{proof}
\end{prop}
\begin{remark}
Let us note that if $\mathcal{M}\subset L^{\infty}(G/P,\nu_P)\rtimes\Gamma$ is a $\Gamma$-invariant subalgebra, then $\mathbb{E}(\M)\subset \M$ (cf. \thref{condinv}).
Moreover, it is enough to construct a $\Gamma$-equivariant normal ucp map $\Psi:\langle \M, L(\Gamma)\rangle\to \M$. Indeed, if such a $\Psi$ exists, then $\Phi$ (of~\thref{upperboundforM}) can be constructed by composing $\Psi$ with the canonical conditional expectation $\mathbb{E}$, i.e., $\Phi=\mathbb{E}\circ\Psi$. We also note that $\M\cap L(\Gamma)=L(\Lambda)$, where $\Lambda\triangleleft\Gamma$. There exists a normal faithful conditional expectation  $\tilde{\mathbb{E}}_{\Lambda}:L^{\infty}(G/P,\nu_P)\rtimes\Gamma\to L^{\infty}(G/P,\nu_P)\rtimes\Lambda$ defined by
\[\tilde{\mathbb{E}}_{\Lambda}\left(f_g\lambda_g\right)=\left\{ \begin{array}{ll}
0 & \mbox{if $g\not\in \Lambda$}\\f_g\lambda(g) & \mbox{otherwise}\end{array}\right\}\]
We refer the reader to \cite[Proposition~2]{choda1978galois} for proof of the above. Since $\mathbb{E}(\M)\subset\M$ and $\M\cap L(\Gamma)=L(\Lambda)$, it follows that $\tilde{\mathbb{E}}_{\Lambda}(\M)\subset\M$. Therefore, to prove the conjecture, it is enough to show that $\tilde{\mathbb{E}}_{\Lambda}\left(\langle \M, L(\Gamma)\rangle\right)\subset\M$. Once this is established, then $\Phi=\mathbb{E}\circ\tilde{\mathbb{E}}_{\Lambda}$ will be the required map. However, we do not know how to show this.
\end{remark}
\bibliographystyle{amsalpha}
\bibliography{name}
\end{document}